\newtheorem{theorem}{Theorem}
\newtheorem{lemma}{Lemma}
\newtheorem{corollary}{Corollary}
\begin{document}
\author{I. Blahota and G. Tephnadze}
\title[Fej\'er means]{Strong convergence theorem for Vilenkin-Fej\'er means }
\address{I. Blahota, Institute of Mathematics and Computer Sciences, College
of Ny\'\i regyh\'aza, P.O. Box 166, Ny\'\i r\-egyh\'aza, H-4400, Hungary}
\email{blahota@nyf.hu}
\address{G. Tephnadze, Department of Mathematics, Faculty of Exact and
Natural Sciences, Tbilisi State University, Chavchavadze str. 1, Tbilisi
0128, Georgia and Department of Engineering Sciences and Mathematics, Lule%
\aa {} University of Technology, SE-971 87, Lule\aa {}, Sweden}
\email{giorgitephnadze@gmail.com}
\thanks{The research was supported by project TÁ%
MOP-4.2.2.A-11/1/KONV-2012-0051 and by Shota Rustaveli National Science
Foundation grant no.52/54 (Bounded operators on the martingale Hardy spaces).%
}
\date{}
\maketitle

\begin{abstract}
As main result we prove strong convergence theorems of Vilenkin-Fej\'er
means when $0<p\leq 1/2.$
\end{abstract}

\date{}

\textbf{2010 Mathematics Subject Classification.} 42C10.

\noindent \textbf{Key words and phrases:} Vilenkin system, Fej\'er means,
martingale Hardy space.

\section{Introduction}

It is well-known that Vilenkin system does not form basis in the space $%
L_{1}\left( G_{m}\right) .$ Moreover, there is a function in the Hardy space
$H_{1}\left( G_{m}\right) ,$ such that the partial sums of $f$ \ are not
bounded in $L_{1}$-norm. However, in Gát \cite{gat1} the following strong
convergence result was obtained for all $f\in H_{1}:$%
\begin{equation*}
\underset{n\rightarrow \infty }{\lim }\frac{1}{\log n}\overset{n}{\underset{%
k=1}{\sum }}\frac{\left\Vert S_{k}f-f\right\Vert _{1}}{k}=0,
\end{equation*}%
where $S_{k}f$ denotes the $k$-th partial sum of the Vilenkin-Fourier series
of $f.$ (For the trigonometric analogue see in Smith \cite{sm}, for the
Walsh-Paley system in Simon \cite{Si3}). Simon \cite{Si4} (see also \cite%
{tep6}) proved that there exists an absolute constant $c_{p},$ depending
only on $p,$ such that
\begin{equation}
\frac{1}{\log ^{\left[ p\right] }n}\overset{n}{\underset{k=1}{\sum }}\frac{%
\left\Vert S_{k}f\right\Vert _{p}^{p}}{k^{2-p}}\leq c_{p}\left\Vert
f\right\Vert _{H_{p}}^{p},\text{ \ \ \ }\left( 0<p\leq 1\right)  \label{1cc}
\end{equation}%
for all $f\in H_{p}$ and $n\in \mathbb{P}_{+},$ where $\left[ p\right] $
denotes integer part of $p.$ In \cite{tep4} it was proved that sequence $%
\left\{ 1/k^{2-p}\right\} _{k=1}^{\infty }$ $\left( 0<p<1\right) $ \ in (\ref%
{1cc}) are given exactly.

Weisz \cite{We2} considered the norm convergence of Fej\'er means of
Walsh-Fourier series and proved the following:

\textbf{Theorem W1\ (Weisz).} Let $p>1/2$ and $f\in H_{p}.$ Then there
exists an absolute constant $c_{p}$, depending only on $p$, such that for
all $f\in H_{p}$ and $k=1,2,\dots$
\begin{equation*}
\left\Vert \sigma _{k}f\right\Vert _{p}\leq c_{p}\left\Vert f\right\Vert
_{H_{p}}.
\end{equation*}%
Theorem W1 implies that%
\begin{equation*}
\frac{1}{n^{2p-1}}\overset{n}{\underset{k=1}{\sum }}\frac{\left\Vert \sigma
_{k}f\right\Vert _{p}^{p}}{k^{2-2p}}\leq c_{p}\left\Vert f\right\Vert
_{H_{p}}^{p},\text{ \ \ \ }\left( 1/2<p<\infty,\ n=1,2,\dots \right).
\end{equation*}

If Theorem W1 holds for $0<p\leq 1/2,$ then we would have%
\begin{equation}
\frac{1}{\log ^{\left[ 1/2+p\right] }n}\overset{n}{\underset{k=1}{\sum }}%
\frac{\left\Vert \sigma _{k}f\right\Vert _{p}^{p}}{k^{2-2p}}\leq
c_{p}\left\Vert f\right\Vert _{H_{p}}^{p},\text{ \ \ \ }\left( 0<p\leq 1/2,\
n=2,3,\dots \right).  \label{2cc}
\end{equation}

However, in \cite{tep1} it was proved that the assumption $p>1/2$ in Theorem
W1 is essential. In particular, the following is true:

\textbf{Theorem T1. }There exists a martingale $f\in H_{1/2}$ such that
\begin{equation*}
\sup_{n}\left\Vert \sigma _{n}f\right\Vert _{1/2}=+\infty.
\end{equation*}

For the Walsh system in \cite{tep5} it was proved that (\ref{2cc}) holds,
though Theorem T1 is not true for $0<p<1/2.$

As main result we generalize inequality (\ref{2cc}) for bounded Vilenkin
systems$.$

The results for summability of Fejér means of Walsh-Fourier series can be
found in \cite{BGG,BGG2,Fu}, \cite{GoAMH,GoPubl,GNCz,PS,Sc,SiW,Si2}.

\section{Definitions and Notations}

Let $\mathbb{P}_{+}$ denote the set of the positive integers, $\mathbb{P}:=%
\mathbb{P}_{+}\cup \{0\}.$

Let $m:=(m_{0,}m_{1},\dots)$ denote a sequence of the positive integers not
less than 2.

Denote by
\begin{equation*}
Z_{m_{k}}:=\{0,1,\dots,m_{k}-1\}
\end{equation*}
the additive group of integers modulo $m_{k}.$

Define the group $G_{m}$ as the complete direct product of the group $%
Z_{m_{j}}$ with the product of the discrete topologies of $Z_{m_{j}}$ $^{,}$%
s.

The direct product $\mu $ of the measures
\begin{equation*}
\mu _{k}\left( \{j\}\right):=1/m_{k}\text{ \qquad }(j\in Z_{m_{k}})
\end{equation*}
is the Haar measure on $G_{m_{\text{ }}}$with $\mu \left( G_{m}\right) =1.$

\textbf{In this paper we discuss bounded Vilenkin groups only, that is }%
\begin{equation*}
\sup_{n}m_{n}<\infty .
\end{equation*}

The elements of $G_{m}$ are represented by sequences
\begin{equation*}
x:=(x_{0},x_{1},\dots ,x_{k},\dots )\qquad \left( \text{ }x_{k}\in
Z_{m_{k}}\right) .
\end{equation*}

It is easy to give a base for the neighbourhood of $G_{m}$
\begin{equation*}
I_{0}\left( x\right):=G_{m},
\end{equation*}%
\begin{equation*}
I_{n}(x):=\{y\in G_{m}\mid y_{0}=x_{0},\dots,y_{n-1}=x_{n-1}\}\text{ }(x\in
G_{m},\text{ }n\in \mathbb{P})
\end{equation*}%
Denote $I_{n}:=I_{n}\left( 0\right) $ for $n\in \mathbb{P}$ and $\overline{%
I_{n}}:=G_{m}$ $\backslash $ $I_{n}$ $.$

Let

\begin{equation*}
e_{n}:=\left( 0,\dots,0,x_{n}=1,0,\dots\right) \in G_{m}\qquad \left( n\in
\mathbb{P}\right).
\end{equation*}

Denote
\begin{equation*}
I_{N}^{k,l}:=\left\{
\begin{array}{l}
I_{N}(0,\dots ,0,x_{k}\neq 0,0,\dots ,0,x_{l}\neq 0,x_{l+1},\dots
,x_{N-1},x_{N},x_{N+1},\dots ), \\
\ k<l<N, \\
I_{N}(0,\dots ,0,x_{k}\neq 0,0,\dots ,0,x_{N},x_{N+1},\dots ),\  \\
l=N.%
\end{array}%
\right.
\end{equation*}%
and
\begin{equation}
\overline{I_{N}}=\left( \overset{N-2}{\underset{k=0}{\bigcup }}\overset{N-1}{%
\underset{l=k+1}{\bigcup }}I_{N}^{k,l}\right) \bigcup \left( \underset{k=0}{%
\bigcup\limits^{N-1}}I_{N}^{k,N}\right) .  \label{2}
\end{equation}

If we define the so-called generalized number system based on $m$ in the
following way:
\begin{equation*}
M_{0}:=1,\text{ \qquad }M_{k+1}:=m_{k}M_{k\text{ }}\ \qquad (k\in \mathbb{P})
\end{equation*}%
then every $n\in \mathbb{P}$ can be uniquely expressed as
\begin{equation*}
n=\sum_{j=0}^{\infty }n_{j}M_{j},
\end{equation*}
where $n_{j}\in Z_{m_{j}}$ $~(j\in \mathbb{P})$ and only a finite number of $%
n_{j}`$s differ from zero. Let $\left\vert n\right\vert :=\max $ $\{j\in
\mathbb{P};$ $n_{j}\neq 0\}.$

For $n=\sum_{i=1}^{r}s_{i}M_{n_{i}}$, where $n_{1}>n_{2}>\dots >n_{r}\geq 0$
and $1\leq s_{i}<m_{n_{i}}$ for all $1\leq i\leq r$ we denote%
\begin{equation*}
\mathbb{A}_{0,2}=\left\{ n\in \mathbb{P}:\text{ }n=M_{0}+M_{2}+%
\sum_{i=1}^{r-2}s_{i}M_{n_{i}}\right\} .
\end{equation*}

The norm (or quasi-norm) of the space $L_{p}(G_{m})$ is defined by \qquad
\qquad \thinspace\
\begin{equation*}
\left\Vert f\right\Vert _{p}:=\left( \int_{G_{m}}\left\vert f(x)\right\vert
^{p}d\mu (x)\right) ^{1/p}\qquad \left( 0<p<\infty \right) .
\end{equation*}

The space $L_{p,\infty }\left( G_{m}\right) $ consists of all measurable
functions $f$ for which

\begin{equation*}
\left\Vert f\right\Vert _{L_{p},\infty }^{p}:=\underset{\lambda >0}{\sup }%
\lambda ^{p}\mu \left\{ f>\lambda \right\} <+\infty .
\end{equation*}

Next, we introduce on $G_{m}$ an orthonormal system which is called the
Vilenkin system.

At first define the complex valued function $r_{k}\left( x\right)
:G_{m}\rightarrow \mathbb{C},$ the generalized Rademacher functions as
\begin{equation*}
r_{k}\left( x\right) :=\exp \left( 2\pi \imath x_{k}/m_{k}\right) \text{
\qquad }\left( \imath ^{2}=-1,\text{ }x\in G_{m},\text{ }k\in \mathbb{P}%
\right) .
\end{equation*}

It is known that%
\begin{equation}
\sum_{k=0}^{m_{n}-1}r_{n}^{k}\left( x\right) =\left\{
\begin{array}{ll}
m_{n}, & \text{\thinspace \thinspace }x_{n}=0, \\
0, & \text{ \thinspace }x_{n}\neq 0,%
\end{array}%
\right.  \label{1a}
\end{equation}

Now define the Vilenkin system $\psi :=(\psi _{n}:n\in \mathbb{P})$ on $%
G_{m} $ as:
\begin{equation*}
\psi _{n}:=\prod_{k=0}^{\infty }r_{k}^{n_{k}}\left( x\right) \text{ \qquad }%
\left( n\in \mathbb{P}\right) .
\end{equation*}

Specially, we call this system the Walsh-Paley one if $m\equiv 2.$

The Vilenkin system is orthonormal and complete in $L_{2}\left( G_{m}\right)
\,$\cite{AVD,Vi}.

Now we introduce analogues of the usual definitions in Fourier-analysis. If $%
f\in L_{1}\left( G_{m}\right) $ we can establish the the Fourier
coefficients, the partial sums of the Fourier series, the Fejér means, the
Dirichlet and Fejér kernels with respect to the Vilenkin system in the usual
manner:
\begin{eqnarray*}
\widehat{f}\left( n\right) &:&=\int_{G_{m}}f\overline{\psi }_{n}d\mu ,\text{%
\ \ \ \ }(n\in \mathbb{P}_{+}) \\
S_{n}f &:&=\sum_{k=0}^{n-1}\widehat{f}\left( k\right) \psi _{k},\text{ \ \ }%
(n\in \mathbb{P}_{+}), \\
\sigma _{n}f &:&=\frac{1}{n}\sum_{k=1}^{n}S_{k}f,\text{ \ \ \ \ }\left( n\in
\mathbb{P}_{+}\right) , \\
D_{n} &:&=\sum_{k=0}^{n-1}\psi _{k},\text{\ \ \ \ \ \ \ \ \ \ }\left( n\in
\mathbb{P}_{+}\right) , \\
K_{n} &:&=\frac{1}{n}\overset{n}{\underset{k=1}{\sum }}D_{k},\text{ \
\thinspace\ \ }\left( n\in \mathbb{P}_{+}\right) .
\end{eqnarray*}

Recall that
\begin{equation}
\quad \hspace*{0in}D_{M_{n}}\left( x\right) =\left\{
\begin{array}{ll}
M_{n}, & \text{if\thinspace \thinspace }x\in I_{n}, \\
0, & \text{\thinspace if \thinspace \thinspace }x\notin I_{n},%
\end{array}%
\right.  \label{3}
\end{equation}%
and%
\begin{equation}
\quad \hspace*{0in}D_{n}\left( x\right) =\psi _{n}\left( x\right)
\sum_{j=0}^{\infty }D_{M_{j}}\left( x\right)
\sum_{p=m_{j}-n_{j}}^{m_{j}-1}r_{j}^{p}.  \label{3a}
\end{equation}

It is well-known that
\begin{equation}
\sup_{n}\int_{G_{m}}\left\vert K_{n}\left( x\right) \right\vert d\mu \left(
x\right) \leq c<\infty .  \label{4}
\end{equation}%
\vspace{0pt}

The $\sigma$-algebra generated by the intervals $\left\{ I_{n}\left(
x\right):x\in G_{m}\right\} $ will be denoted by $\digamma _{n}$ $\left(
n\in \mathbb{P}\right).$ Denote by $f=\left( f^{\left( n\right) },n\in
\mathbb{P}\right) $ a martingale with respect to $\digamma _{n}$ $\left(
n\in \mathbb{P}\right)$ (for details see e.g. \cite{We1}). The maximal
function of a martingale $f$ is defined by \qquad
\begin{equation*}
f^{*}=\sup_{n\in \mathbb{P}}\left| f^{\left( n\right) }\right|.
\end{equation*}

In case $f\in L_{1}(G_{m}),$ then it is easy to show that the sequence $%
\left( S_{M_{n}}\left( f\right) :n\in \mathbb{P}\right) $ is a martingale.
Moreover, the maximal functions are also be given by
\begin{equation*}
f^{\ast }\left( x\right) =\sup_{n\in \mathbb{P}}\frac{1}{\left\vert
I_{n}\left( x\right) \right\vert }\left\vert \int_{I_{n}\left( x\right)
}f\left( u\right) \mu \left( u\right) \right\vert
\end{equation*}

For $0<p<\infty $ the Hardy martingale spaces $H_{p}$ $\left( G_{m}\right) $
consist of all martingales for which
\begin{equation*}
\left\| f\right\| _{H_{p}}:=\left\| f^{*}\right\| _{p}<\infty.
\end{equation*}

If $f=\left( f^{\left( n\right) },n\in \mathbb{P}\right) $ is martingale
then the Vilenkin-Fourier coefficients must be defined in a slightly
different manner: $\qquad \qquad $
\begin{equation*}
\widehat{f}\left( i\right) :=\lim_{k\rightarrow \infty
}\int_{G_{m}}f^{\left( k\right) }\left( x\right) \overline{\psi }_{i}\left(
x\right) d\mu \left( x\right) .
\end{equation*}

The Vilenkin-Fourier coefficients of $f\in L_{1}\left( G_{m}\right) $ are
the same as those of the martingale $\left( S_{M_{n}}\left( f\right) :n\in
\mathbb{P}\right) $ obtained from $f$.

A bounded measurable function $a$ is p-atom, if there exist a dyadic
interval $I$, such that%
\begin{equation*}
\int_{I}ad\mu =0,\text{ \ \ }\left\Vert a\right\Vert _{\infty }\leq \mu
\left( I\right) ^{-1/p},\text{ \ \ supp}\left( a\right) \subset I.\qquad
\end{equation*}
\qquad

\section{Formulation of Main Result}

\begin{theorem}
Let $0<p\leq 1/2$. Then there exists an absolute constant $c_{p}>0$,
depending only on $p$, such that for all $f\in H_{p}$ and $n=2,3,\dots $
\begin{equation*}
\frac{1}{\log ^{\left[ 1/2+p\right] }n}\overset{n}{\underset{k=1}{\sum }}%
\frac{\left\Vert \sigma _{k}f\right\Vert _{p}^{p}}{k^{2-2p}}\leq
c_{p}\left\Vert f\right\Vert _{H_{p}}^{p},
\end{equation*}%
where $\left[ x\right] $ denotes integer part of $x.$
\end{theorem}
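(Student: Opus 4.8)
The plan is to use the atomic characterisation of $H_{p}$ and to reduce the estimate to a uniform bound over $p$-atoms. First I would invoke the atomic decomposition: every $f\in H_{p}$ can be written as $f=\sum_{\mu}\lambda _{\mu }a_{\mu }$ with $p$-atoms $a_{\mu }$ and $\sum_{\mu }|\lambda _{\mu }|^{p}\leq c_{p}\left\Vert f\right\Vert _{H_{p}}^{p}$. Since each $\sigma _{k}$ is linear and $\left\Vert \cdot \right\Vert _{p}^{p}$ is $p$-subadditive for $0<p\leq 1$, one gets
\begin{equation*}
\left\Vert \sigma _{k}f\right\Vert _{p}^{p}\leq \sum_{\mu }|\lambda _{\mu }|^{p}\left\Vert \sigma _{k}a_{\mu }\right\Vert _{p}^{p},
\end{equation*}
so it suffices to prove $\log ^{-[1/2+p]}n\,\sum_{k=1}^{n}\left\Vert \sigma _{k}a\right\Vert _{p}^{p}/k^{2-2p}\leq c_{p}$ uniformly over all $p$-atoms $a$; taking the infimum over decompositions then yields the theorem.

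Fix a $p$-atom $a$; its support is a dyadic interval which we may take to be $I_{N}$, so that $\left\Vert a\right\Vert _{\infty }\leq M_{N}^{1/p}$ and $\int_{I_{N}}a\,d\mu =0$. The key preliminary observation is that $S_{j}a=0$ for every $j\leq M_{N}$: indeed, by (\ref{3}) the sum $S_{M_{N}}a$ is the conditional expectation of $a$ with respect to $\digamma _{N}$, which vanishes because $a$ is supported on $I_{N}$ and has mean zero, and $S_{j}a=S_{j}S_{M_{N}}a$ for $j\leq M_{N}$. Consequently $\sigma _{k}a=0$ for all $k\leq M_{N}$, and the whole sum reduces to the range $k>M_{N}$ (the statement being trivial when $n\leq M_{N}$).

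For $k>M_{N}$ I would split $\left\Vert \sigma _{k}a\right\Vert _{p}^{p}=\int_{I_{N}}\left\vert \sigma _{k}a\right\vert ^{p}+\int_{\overline{I_{N}}}\left\vert \sigma _{k}a\right\vert ^{p}$. On $I_{N}$ the $L_{\infty }$-contractivity of the Fej\'er means (a consequence of (\ref{4}), since $\sigma _{k}a=a\ast K_{k}$) gives $\left\Vert \sigma _{k}a\right\Vert _{\infty }\leq c\left\Vert a\right\Vert _{\infty }\leq cM_{N}^{1/p}$, hence $\int_{I_{N}}\left\vert \sigma _{k}a\right\vert ^{p}\leq \mu (I_{N})(cM_{N}^{1/p})^{p}\leq c_{p}$, uniformly in $k$ and $N$. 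The main work is the estimate on $\overline{I_{N}}$. Writing $\sigma _{k}a(x)=\int_{I_{N}}a(t)\big(K_{k}(x-t)-K_{k}(x)\big)\,d\mu (t)$ (group subtraction, the cancellation term inserted because $\int a=0$), I would bound $\left\vert \sigma _{k}a(x)\right\vert$ pointwise through precise estimates of the Vilenkin-Fej\'er kernel $K_{k}$ on each region $I_{N}^{s,t}$ of the decomposition (\ref{2}) (bounds of the type $\left\vert K_{k}(x)\right\vert \leq c\,M_{s}M_{t}/k$ for $x\in I_{N}^{s,t}$, relabelling the indices of (\ref{2}) to avoid a clash with $k$); these kernel estimates are the technical heart and the step I expect to be the main obstacle. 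Integrating $\left\vert \cdot \right\vert ^{p}$ over $\overline{I_{N}}$ and summing the geometric-type series produced by (\ref{2}) should yield $\int_{\overline{I_{N}}}\left\vert \sigma _{k}a\right\vert ^{p}\leq c_{p}$ uniformly in $k>M_{N}$ and in the atom.

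Granting the per-term bound $\left\Vert \sigma _{k}a\right\Vert _{p}^{p}\leq c_{p}$ for $k>M_{N}$, the conclusion follows by summation against the weight:
\begin{equation*}
\sum_{k=M_{N}+1}^{n}\frac{\left\Vert \sigma _{k}a\right\Vert _{p}^{p}}{k^{2-2p}}\leq c_{p}\sum_{k=M_{N}+1}^{n}\frac{1}{k^{2-2p}}.
\end{equation*}
Here the dichotomy in $[1/2+p]$ appears naturally: for $0<p<1/2$ the exponent $2-2p>1$, the series converges, and the bound is $c_{p}$ with no logarithmic factor ($[1/2+p]=0$); for $p=1/2$ the exponent equals $1$, the partial sum is $O(\log n)$, matched exactly by $\log ^{[1/2+p]}n=\log n$. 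Thus the principal difficulty is the sharp kernel analysis on $\overline{I_{N}}$ needed to secure the uniform bound on the $\overline{I_{N}}$-integral; the passage from atoms to general martingales and the final summation are routine.
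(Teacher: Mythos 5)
Your proposal follows the same architecture as the paper's proof: atomic decomposition (Lemma 1), the observation that $\sigma _{k}a=0$ for $k\leq M_{N}$, the splitting of $\left\Vert \sigma _{k}a\right\Vert _{p}^{p}$ into integrals over $I_{N}$ and $\overline{I_{N}}$, the $L_{\infty }$-argument on $I_{N}$, and kernel estimates over the pieces of the decomposition (\ref{2}). But it contains one genuine, fatal gap: your central claim that
\begin{equation*}
\int_{\overline{I_{N}}}\left\vert \sigma _{k}a\right\vert ^{p}d\mu \leq
c_{p}\quad \text{uniformly in }k>M_{N}\text{ and in the atom }a
\end{equation*}
is false for every $0<p\leq 1/2$. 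Even granting the sharp kernel estimates (they are exactly the paper's Lemma 3, stated in integrated form $\int_{I_{N}}\left\vert K_{m}\left( x-t\right) \right\vert d\mu \left( t\right) \leq cM_{l}M_{k}/\left( mM_{N}\right) $, with no $m$-decay on the pieces $I_{N}^{k,N}$), what they yield after integration over (\ref{2}) is the paper's estimate (\ref{7aaa}),
\begin{equation*}
\int_{\overline{I_{N}}}\left\vert \sigma _{m}a\left( x\right) \right\vert
^{p}d\mu \left( x\right) \leq \frac{cM_{N}^{1-p}N^{\left[ 1/2+p\right] }}{
m^{p}}+c_{p},
\end{equation*}
and the first term is really there: for $m$ comparable to $M_{N}$ it is of order $M_{N}^{1-2p}$ when $0<p<1/2$ and of order $N$ when $p=1/2$, hence unbounded as $N\rightarrow \infty $. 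That no argument can remove it follows from results quoted in this very paper: if your uniform bound held, your own first reduction step ($p$-subadditivity plus the infimum over atomic decompositions) would give $\sup_{k}\left\Vert \sigma _{k}f\right\Vert _{p}\leq c_{p}\left\Vert f\right\Vert _{H_{p}}$ for all $f\in H_{p}$, i.e.\ Theorem W1 without the restriction $p>1/2$. For $p=1/2$ this contradicts Theorem T1; for $0<p<1/2$ it contradicts Theorem 2 (take $\Phi \left( n\right) =n^{2-2p-\varepsilon }$ with $0<\varepsilon <1-2p$, so that (\ref{cond}) holds while $\sum_{k}1/\Phi \left( k\right) <\infty $). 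The failure can also be seen directly on atoms of the form $a=cM_{N}^{1/p-1}\left( D_{M_{N}+1}-D_{M_{N}}\right) $, as in the proof of Theorem 2.

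Consequently the mechanism producing $\log ^{\left[ 1/2+p\right] }n$ is more delicate than the one you describe. The unbounded term must be killed by the weighted summation itself: since $2-p>1$,
\begin{equation*}
\sum_{m=M_{N}+1}^{n}\frac{1}{m^{2-2p}}\cdot \frac{M_{N}^{1-p}N^{\left[ 1/2+p
\right] }}{m^{p}}\leq cM_{N}^{1-p}N^{\left[ 1/2+p\right]
}\sum_{m=M_{N}+1}^{\infty }\frac{1}{m^{2-p}}\leq cN^{\left[ 1/2+p\right] },
\end{equation*}
and the surviving factor $N^{\left[ 1/2+p\right] }$ is then absorbed by the normalization because $n>M_{N}\geq 2^{N}$ gives $N^{\left[ 1/2+p\right] }\leq c\log ^{\left[ 1/2+p\right] }n$. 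So the logarithm is needed for two distinct reasons: the one you identified (divergence of $\sum_{m\leq n}1/m$ when $p=1/2$, coming from the $I_{N}$ part, where the uniform bound \emph{is} correct), and the absorption of $N^{\left[ 1/2+p\right] }$ coming from the $\overline{I_{N}}$ part, which your argument misses entirely; the latter factor originates in $\sum_{k=0}^{N-2}M_{k}^{2p-1}$, which is bounded for $p<1/2$ but equals $N-1$ at $p=1/2$. The rest of your outline (reduction to atoms, vanishing of $\sigma _{k}a$ for $k\leq M_{N}$, the estimate on $I_{N}$) matches the paper and is correct.
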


\begin{corollary}
Let $f\in H_{1/2}.$ Then
\begin{equation*}
\frac{1}{\log n}\overset{n}{\underset{k=1}{\sum }}\frac{\left\Vert \sigma
_{k}f-f\right\Vert _{1/2}^{1/2}}{k}\rightarrow 0,\text{ as }n\rightarrow
\infty.
\end{equation*}
\end{corollary}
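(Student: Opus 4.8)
The plan is to establish the key weak-type/strong estimate

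$$\frac{1}{\log^{[1/2+p]} n}\sum_{k=1}^{n}\frac{\|\sigma_k f\|_p^p}{k^{2-2p}}\le c_p\|f\|_{H_p}^p$$

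via the standard atomic decomposition of $H_p$. Since both sides are controlled by a sum over dyadic atoms, it suffices by linearity and the subadditivity of the $p$-th power (for $0<p\le 1$) to prove the inequality for a single $p$-atom $a$ supported on a dyadic interval $I=I_N$. So I would fix an arbitrary $p$-atom $a$ with $\mathrm{supp}(a)\subset I_N$, $\|a\|_\infty\le M_N^{1/p}$, and $\int_I a\,d\mu=0$, and reduce everything to estimating $\sigma_k a$.

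The core of the argument is a pointwise kernel estimate. First I would note that for $k\le M_N$ the operator $\sigma_k$ sees $a$ only through its mean-zero property on $I_N$, so $\sigma_k a=0$ there; the interesting range is $k>M_N$. On the complement $\overline{I_N}$, I would decompose using the partition \eqref{2} into the pieces $I_N^{k,l}$, and use the representation \eqref{3a} of the Dirichlet kernels together with the boundedness \eqref{4} of $\|K_n\|_1$ to get a pointwise bound of the form $|\sigma_k a(x)|\le c\,M_N^{1/p}\,(\text{kernel factor depending on }x\in I_N^{k,l})$. Integrating $|\sigma_k a|^p$ over each $I_N^{k,l}$ and summing over $k,l$ should produce a bound on $\|\sigma_k a\|_p^p$ that, when divided by $k^{2-2p}$ and summed in $k$ against the logarithmic normalization, telescopes to an absolute constant. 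The appearance of the exponent $[1/2+p]$ — which is $0$ for $0<p<1/2$ and $1$ for $p=1/2$ — is exactly what distinguishes the two regimes: for $p<1/2$ the sum $\sum_k 1/k^{2-2p}$ over the relevant dyadic blocks converges on its own, whereas at the endpoint $p=1/2$ the sum $\sum 1/k$ diverges logarithmically and one genuinely needs the $\log n$ factor to absorb it.

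The main obstacle will be the fine kernel estimate on the sets $I_N^{k,l}$: one must track carefully how large $|K_k(x)|$ (equivalently the averaged Dirichlet kernels) can be for $x$ in the various annular pieces $I_N^{k,l}$ of $\overline{I_N}$, and how this growth is exactly compensated by the smallness of $\mu(I_N^{k,l})$ when one raises to the power $p$. This is where the boundedness of the Vilenkin group, $\sup_n m_n<\infty$, enters decisively, since it keeps all the structural constants uniform. I expect the estimate to rely on splitting the $k$-sum into dyadic blocks $M_s\le k<M_{s+1}$ and showing that the contribution of each block is summable, with the borderline block producing the single logarithm at $p=1/2$.

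**For the Corollary**, the plan is to combine the Theorem with the known density of Vilenkin polynomials in $H_{1/2}$ and the fact that $\sigma_k f\to f$ for polynomials. Given $\varepsilon>0$, choose a polynomial $P$ with $\|f-P\|_{H_{1/2}}^{1/2}<\varepsilon$; writing $\sigma_k f-f=\sigma_k(f-P)+(\sigma_k P-P)+(P-f)$ and using the quasi-triangle inequality for $\|\cdot\|_{1/2}^{1/2}$, the first and third terms are controlled by the Theorem applied to $f-P$, while the middle term vanishes for $k$ large since $\sigma_k P=P$ eventually. Dividing by $\log n$ and summing then gives that the Cesàro-logarithmic mean tends to $0$, completing the proof.
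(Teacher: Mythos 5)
Your overall route is the intended one: the paper states the Corollary without proof, as the standard consequence of Theorem 1 (taken with $p=1/2$, so that $\left[ 1/2+p\right] =1$ and $k^{2-2p}=k$), and the standard derivation is exactly the density argument you describe: approximate $f$ in $H_{1/2}$ by a Vilenkin polynomial $P$, split $\sigma _{k}f-f=\sigma _{k}(f-P)+(\sigma _{k}P-P)+(P-f)$, use subadditivity of $\left\Vert \cdot \right\Vert _{1/2}^{1/2}$, bound the first term via Theorem 1 applied to $f-P$, and the third via $\left\Vert P-f\right\Vert _{1/2}^{1/2}\leq \left\Vert P-f\right\Vert _{H_{1/2}}^{1/2}$ together with $\frac{1}{\log n}\sum_{k\leq n}k^{-1}=O(1)$. (Your opening re-derivation of the Theorem itself is unnecessary for the Corollary --- you may simply cite it --- though the sketch does match the paper's atomic-decomposition proof.)

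One claim in your argument is false as stated: it is not true that $\sigma _{k}P=P$ for large $k$. If $P$ is a Vilenkin polynomial of degree less than $M_{N}$, then $S_{j}P=P$ only for $j\geq M_{N}$, and the Fej\'er mean still carries the low-order partial sums: for $k\geq M_{N}$ one has
\begin{equation*}
\sigma _{k}P-P=\frac{1}{k}\sum_{j=1}^{M_{N}-1}\left( S_{j}P-P\right) ,
\end{equation*}
which is in general nonzero for every $k$. The step is easily repaired, because this identity gives $\left\Vert \sigma _{k}P-P\right\Vert _{\infty }\leq C_{P}/k$, hence (the measure being a probability measure) $\left\Vert \sigma _{k}P-P\right\Vert _{1/2}^{1/2}\leq C_{P}^{1/2}k^{-1/2}$, and therefore
\begin{equation*}
\frac{1}{\log n}\sum_{k=1}^{n}\frac{\left\Vert \sigma _{k}P-P\right\Vert
_{1/2}^{1/2}}{k}\leq \frac{C_{P}^{1/2}}{\log n}\sum_{k=1}^{n}\frac{1}{k^{3/2}
}=O\left( \frac{1}{\log n}\right) \rightarrow 0.
\end{equation*}
With this correction, and the standard density fact you invoke (e.g. $\left\Vert f-S_{M_{n}}f\right\Vert _{H_{1/2}}\rightarrow 0$, so that $P$ may be taken of the form $S_{M_{n}}f$), your proof of the Corollary is complete: the three contributions are bounded respectively by $c\varepsilon ^{1/2}$, $o(1)$, and $c\varepsilon ^{1/2}$, and letting $\varepsilon \rightarrow 0$ finishes the argument.
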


\begin{theorem}
Let $0<p<1/2$ and $\Phi :\mathbb{P}_{+}\rightarrow \lbrack 1,$ $\infty )$ is
any nondecreasing function, satisfying the conditions $\Phi \left( n\right)
\uparrow \infty $ and
\begin{equation}
\overline{\underset{n\rightarrow \infty }{\lim }}\frac{n^{2-2p}}{\Phi \left(
n\right) }=\infty .  \label{cond}
\end{equation}%
Then there exists a martingale $f\in H_{p},$ such that
\begin{equation*}
\underset{k=1}{\overset{\infty }{\sum }}\frac{\left\Vert \sigma
_{k}f\right\Vert _{L_{p,\infty }}^{p}}{\Phi \left( k\right) }=\infty .
\end{equation*}
\end{theorem}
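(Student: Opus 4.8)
The plan is to prove sharpness by an explicit counterexample: I would build a single martingale $f\in H_{p}$ together with a sequence of indices $m_{k}\to\infty$ for which the terms $\left\Vert \sigma_{m_{k}}f\right\Vert_{L_{p,\infty}}^{p}/\Phi(m_{k})$ do not tend to zero. Since the series in the statement has nonnegative terms, exhibiting infinitely many terms bounded away from $0$ already forces divergence, so the whole burden is to produce one good block for each $k$ and to keep the blocks from interfering.

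First I would exploit hypothesis (\ref{cond}). Because the $\limsup$ there is infinite, while $\Phi$ is nondecreasing and boundedness of the Vilenkin group gives $M_{n+1}\leq c\,M_{n}$, I can pass to a subsequence aligned with the generalized number system and choose a strictly increasing $\{\alpha_{k}\}\subset\mathbb{P}_{+}$, growing so fast (say $\alpha_{k+1}>2\alpha_{k}$) that $M_{\alpha_{k}}^{2-2p}/\Phi(M_{\alpha_{k}})\geq 8^{k}$. With this in hand I set the coefficients $\lambda_{k}:=\left(\Phi(M_{\alpha_{k}})/M_{\alpha_{k}}^{2-2p}\right)^{1/p}$, so that on the one hand $\sum_{k}\lambda_{k}^{p}=\sum_{k}\Phi(M_{\alpha_{k}})/M_{\alpha_{k}}^{2-2p}\leq\sum_{k}8^{-k}<\infty$, and on the other hand, for any index $m_{k}$ comparable to $M_{\alpha_{k}}$, the quantity $\lambda_{k}^{p}\,m_{k}^{2-2p}/\Phi(m_{k})$ stays bounded below by a positive constant.

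Next I take as building blocks the normalized Dirichlet differences
\begin{equation*}
a_{k}:=M_{\alpha_{k}}^{1/p-1}\left(D_{M_{\alpha_{k}+1}}-D_{M_{\alpha_{k}}}\right),
\end{equation*}
each of which, by (\ref{3}), is supported on $I_{\alpha_{k}}$, has vanishing integral, and satisfies $\left\Vert a_{k}\right\Vert_{\infty}\leq\mu(I_{\alpha_{k}})^{-1/p}$ up to the bounded factor $m_{\alpha_{k}}$; hence each $a_{k}$ is a constant multiple of a $p$-atom. Putting $f:=\sum_{k}\lambda_{k}a_{k}$ and using $\sum_{k}\lambda_{k}^{p}<\infty$, the atomic characterization of the martingale Hardy space (cf. \cite{We1}) gives $f\in H_{p}$ with $\left\Vert f\right\Vert_{H_{p}}^{p}\leq c_{p}\sum_{k}\lambda_{k}^{p}<\infty$. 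The Vilenkin--Fourier coefficients of $f$ then sit in the lacunary blocks $[M_{\alpha_{k}},M_{\alpha_{k}+1})$, so for an index $m_{k}$ in the $k$-th block the partial sum $S_{m_{k}}$ annihilates all atoms $a_{j}$ with $j>k$, while the fast growth of $\{\alpha_{k}\}$ makes the contributions of the atoms $a_{j}$ with $j<k$ negligible; thus $\sigma_{m_{k}}f$ is, up to a harmless error, equal to $\lambda_{k}\,\sigma_{m_{k}}a_{k}$.

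Finally, for a carefully chosen $m_{k}\in(M_{\alpha_{k}},M_{\alpha_{k}+1})$ of the special arithmetic type singled out by the set $\mathbb{A}_{0,2}$ (for which the Vilenkin--Fej\'er kernel is large), I would estimate $\sigma_{m_{k}}a_{k}$ from below. Writing $\sigma_{m_{k}}a_{k}=a_{k}\ast K_{m_{k}}$ and using the kernel representation (\ref{3a}) together with the decomposition (\ref{2}) of $\overline{I_{\alpha_{k}}}$ into the pieces $I_{\alpha_{k}}^{s,t}$, the goal is a pointwise lower bound on a set of controlled measure that produces
\begin{equation*}
\left\Vert \sigma_{m_{k}}f\right\Vert_{L_{p,\infty}}^{p}\geq c\,\lambda_{k}^{p}\,M_{\alpha_{k}}^{2-2p}.
\end{equation*}
Dividing by $\Phi(m_{k})\leq\Phi(M_{\alpha_{k}+1})$ and invoking the choice of $\{\alpha_{k}\}$ and $\{\lambda_{k}\}$ from the second step keeps each of these terms bounded below, so the series diverges. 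The main obstacle is exactly this last lower estimate: one must analyze $K_{m_{k}}$ on the pieces $I_{\alpha_{k}}^{s,t}$ and show that the Fej\'er averaging does not dissipate the size generated by the single atom $a_{k}$, while at the same time bounding the tails from the remaining atoms. Here the global bound (\ref{4}) on $\left\Vert K_{n}\right\Vert_{1}$ is far too crude, and sharp pointwise kernel estimates in the spirit of Theorem T1 are what drive the argument.
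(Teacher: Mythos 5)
Your construction (Dirichlet-difference atoms $a_k$, coefficients $\lambda_k$ tuned to $\Phi$, Lemma 1 for membership in $H_p$, test indices drawn from $\mathbb{A}_{0,2}$) is the same skeleton as the paper's, but the quantitative heart of your plan is unattainable. You want, for a single index $m_k$ per block,
\begin{equation*}
\left\Vert \sigma_{m_k}f\right\Vert_{L_{p,\infty}}^{p}\geq c\,\lambda_k^p\,M_{\alpha_k}^{2-2p},
\quad\text{i.e.}\quad
\left\Vert \sigma_{m_k}a_k\right\Vert_{L_{p,\infty}}^{p}\geq c\,M_{\alpha_k}^{2-2p}
\end{equation*}
for a $p$-atom $a_k$ supported on $I_{\alpha_k}$. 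This is false by a full factor of $M_{\alpha_k}$: since $\left\Vert \cdot\right\Vert_{L_{p,\infty}}\leq\left\Vert \cdot\right\Vert_{p}$, the estimates (\ref{6aaaa}) and (\ref{7aaa}) from the paper's own proof of Theorem 1 (where $\left[1/2+p\right]=0$ because $p<1/2$) give, for every $p$-atom $a$ on $I_N$ and every $m>M_N$,
\begin{equation*}
\left\Vert \sigma_{m}a\right\Vert_{L_{p,\infty}}^{p}\leq\left\Vert \sigma_{m}a\right\Vert_{p}^{p}\leq c_p+\frac{cM_N^{1-p}}{m^{p}}\leq c_pM_N^{1-2p}.
\end{equation*}
With the correct ceiling $M_{\alpha_k}^{1-2p}$ in place, the one-term-per-block strategy cannot succeed for all admissible $\Phi$: take $\Phi(n)=\max\left\{1,\,n^{2-2p}/\log(n+2)\right\}$, which satisfies (\ref{cond}); then keeping $\lambda_k^p\left\Vert \sigma_{m_k}a_k\right\Vert_{L_{p,\infty}}^{p}/\Phi(m_k)$ bounded below would force $\lambda_k^p\geq cM_{\alpha_k}/\log(M_{\alpha_k}+2)\rightarrow\infty$, contradicting the summability $\sum_k\lambda_k^p<\infty$ that you need for $f\in H_p$. (A smaller slip: with your choice $\lambda_k^p=\Phi(M_{\alpha_k})/M_{\alpha_k}^{2-2p}$, the quantity $\lambda_k^pm_k^{2-2p}/\Phi(m_k)$ need not be bounded below, since the nondecreasing $\Phi$ may jump arbitrarily inside the block $(M_{\alpha_k},M_{\alpha_k+1})$; the paper anchors $\Phi$ at the top of the block, as in (\ref{12j}).)

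The paper escapes exactly this obstruction by letting the divergence come from the \emph{number} of indices per block rather than the size of any single term. Its coefficients use a square-root splitting, $\lambda_k^p=\Phi^{1/2}\left(M_{\left\vert\alpha_k\right\vert+1}\right)/M_{\left\vert\alpha_k\right\vert}^{1-p}$: condition (\ref{121}) keeps $\sum_k\lambda_k^p<\infty$, while (\ref{22w}) keeps $M_{\left\vert\alpha_k\right\vert}^{1-p}/\Phi^{1/2}\left(M_{\left\vert\alpha_k\right\vert+1}\right)\rightarrow\infty$. Then, for \emph{every} $n\in\mathbb{A}_{0,2}$ with $M_{\left\vert\alpha_k\right\vert}<n<M_{\left\vert\alpha_k\right\vert+1}$, an exact computation on the fixed set $I_2^{0,1}$ (displays (\ref{30a})--(\ref{11aaa}), resting on Lemmas 4 and 6 rather than on deep kernel lower bounds) yields only the modest per-term estimate $\left\Vert \sigma_nf\right\Vert_{L_{p,\infty}}^p\geq c\,\Phi^{1/2}\left(M_{\left\vert\alpha_k\right\vert+1}\right)/M_{\left\vert\alpha_k\right\vert+1}^p$, which is consistent with the ceiling above; summing over the $\geq cM_{\left\vert\alpha_k\right\vert}$ admissible indices in the block then gives a contribution $\geq cM_{\left\vert\alpha_k\right\vert}^{1-p}/\Phi^{1/2}\left(M_{\left\vert\alpha_k\right\vert+1}\right)\rightarrow\infty$. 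Both ingredients — the $\Phi^{1/2}$ calibration of the coefficients and the summation over a whole block of indices — are indispensable, and your proposal has neither.
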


\section{Auxiliary Propositions}

\begin{lemma}
\cite{We3} (see also \cite{We3}) A martingale $f=\left( f^{\left( n\right)
},n\in \mathbb{P}\right) $ is in $H_{p}\left( 0<p\leq 1\right) $ if and only
if there exist a sequence $\left( a_{k},k\in \mathbb{P}\right) $ of p-atoms
and a sequence $\left( \mu _{k},k\in \mathbb{P}\right) $ of a real numbers
such that for every $n\in \mathbb{P}$
\end{lemma}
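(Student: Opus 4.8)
The statement is the martingale atomic decomposition: $f\in H_p$ if and only if $f^{(n)}=\sum_k\mu_k S_{M_n}a_k$ for every $n$ with $\sum_k|\mu_k|^p<\infty$, in which case $\|f\|_{H_p}^p$ is comparable to the infimum of $\sum_k|\mu_k|^p$ over all such representations. The plan is to treat the two implications separately, the sufficiency being soft and the necessity requiring an explicit stopping-time construction. For sufficiency, assume $f^{(n)}=\sum_k\mu_k S_{M_n}a_k$ with $\sum_k|\mu_k|^p<\infty$, each $a_k$ a $p$-atom supported on a dyadic interval $I_k$. Since $0<p\le 1$ the functional $\|\cdot\|_{H_p}^p$ is subadditive, so it is enough to show $\|a\|_{H_p}\le 1$ for an arbitrary $p$-atom $a$ on $I=I_N(t)$. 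Recalling from (\ref{3}) that $S_{M_n}=E(\cdot\mid\digamma_n)$, for $n\le N$ one gets $S_{M_n}a=0$ because the coarse average of $a$ over any interval containing $I$ vanishes by $\int_I a\,d\mu=0$, while for $n>N$ the function $S_{M_n}a=E(a\mid\digamma_n)$ is supported on $I$ and satisfies $|S_{M_n}a|\le\|a\|_\infty$. Hence the maximal function $a^{*}$ is supported on $I$ with $\|a^{*}\|_\infty\le\|a\|_\infty\le\mu(I)^{-1/p}$, so $\|a\|_{H_p}^p=\int_I(a^{*})^p\,d\mu\le\mu(I)^{-1}\mu(I)=1$, and summing gives $\|f\|_{H_p}^p\le\sum_k|\mu_k|^p$.

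For necessity, fix $f\in H_p$ and for $k\in\mathbb{Z}$ introduce the stopping time $\nu_k:=\inf\{n\in\mathbb{P}:|f^{(n)}|>2^k\}$ for the filtration $(\digamma_n)$. The level set $\{f^{*}>2^k\}=\{\nu_k<\infty\}$ splits into the disjoint dyadic intervals $I_{k,j}$ on which $\nu_k$ is constant, and one has the telescoping identity $f=\sum_{k\in\mathbb{Z}}(f^{\nu_{k+1}}-f^{\nu_k})$ in the martingale sense. Putting $\mu_{k,j}:=c\,2^{k}\mu(I_{k,j})^{1/p}$ and $a_{k,j}:=\mu_{k,j}^{-1}(f^{\nu_{k+1}}-f^{\nu_k})\mathbf{1}_{I_{k,j}}$, I would check that every $a_{k,j}$ is a $p$-atom on $I_{k,j}$: its mean-zero property $\int_{I_{k,j}}(f^{\nu_{k+1}}-f^{\nu_k})\,d\mu=0$ comes from the optional stopping theorem, and its support is $I_{k,j}$ by construction. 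After relabelling the pairs $(k,j)$ as a single sequence, the distribution-function estimate $\sum_{k,j}|\mu_{k,j}|^p=c^p\sum_k 2^{kp}\mu(\{f^{*}>2^k\})\le c'\|f^{*}\|_p^p=c'\|f\|_{H_p}^p$ supplies both convergence of the series and the bound $\inf(\sum_k|\mu_k|^p)^{1/p}\le c''\|f\|_{H_p}$, completing the equivalence.

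The decisive and most delicate point is the uniform $L_\infty$ normalization of the pieces, namely $\|(f^{\nu_{k+1}}-f^{\nu_k})\mathbf{1}_{I_{k,j}}\|_\infty\le c\,2^{k}$ with $c$ depending only on $\sup_n m_n$. This is exactly where the bounded Vilenkin hypothesis $\sup_n m_n<\infty$ is indispensable: it makes the filtration $(\digamma_n)$ regular, so that the stopped value at level $2^{k+1}$ cannot exceed $(\sup_n m_n)\,2^{k+1}$ even though $|f^{(\nu_{k+1}-1)}|\le 2^{k+1}$, since a single refinement from $\digamma_{n-1}$ to $\digamma_n$ enlarges the local averages by a factor at most $m_{n-1}\le\sup_n m_n$. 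Once this jump bound is secured, the atom estimate $\|a_{k,j}\|_\infty\le\mu(I_{k,j})^{-1/p}$ follows (after absorbing $c$ into the quasi-norm), and the remaining verifications are routine. This is precisely the decomposition established by Weisz, to whom we refer for the complete details.
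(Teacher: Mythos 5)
The paper does not actually prove this lemma: it is quoted verbatim from Weisz's monograph, so your attempt can only be measured against the known proof there and against correctness. Your sufficiency half is correct and complete: from (\ref{3}) one indeed gets $S_{M_n}a=E(a\mid\digamma_n)$, hence $S_{M_n}a=0$ for $n\le N$ (the average of $a$ over any interval containing its support vanishes), $|S_{M_n}a|\le\|a\|_\infty$ with support in $I$ for $n>N$, so $\|a\|_{H_p}\le 1$, and $p$-subadditivity of $\|\cdot\|_{H_p}^p$ finishes. This is also the only direction of the lemma that the paper ever uses.

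The necessity half, however, hinges on a false claim --- precisely the step you yourself single out as decisive. Regularity of the filtration does \emph{not} bound the stopped value: the inequality $f^{(n)}\le (\sup_k m_k)\,f^{(n-1)}$ holds only for \emph{nonnegative} martingales, and for signed martingales a single refinement can enlarge local averages by an arbitrarily large factor because of cancellation. Concretely, take $m\equiv 2$, $f^{(0)}=0$, $f^{(1)}=M$ on $I_{1}\left( 0\right) $ and $f^{(1)}=-M$ elsewhere, $f^{(n)}=f^{(1)}$ for $n\ge 1$: this is a martingale on a regular (dyadic) filtration, yet at the first exit over any level $\lambda<M$ the stopped value is $M$, unbounded relative to $\lambda$. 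The same example breaks your telescoping identity: every $f^{\nu_k}$ equals $f$ (each $\nu_k$ is $1$ or $\infty$), so $\sum_{k}\left( f^{\nu_{k+1}}-f^{\nu_k}\right) =0\neq f$, because $f^{\nu_k}\not\rightarrow 0$ as $k\rightarrow -\infty$; the initial jump is never captured by any atom. These defects are not repairable within your scheme, and they are the reason Weisz's actual proof does not stop on the maximal function at all: he proves the atomic decomposition for the space defined by the conditional quadratic variation $s(f)$, using the stopping time $\nu_k=\inf \left\{ n:s_{n+1}(f)>2^{k}\right\} $, whose crucial feature is that $s_{n+1}(f)$ is $\digamma_n$-measurable, so the stopped martingale satisfies $s\left( f^{\nu_k}\right) \le 2^{k}$ pointwise and no jump ever needs to be controlled; the passage to the maximal-function space $H_p$ then uses the equivalence of $H_p^{\ast }$ with the $s$-defined Hardy space for regular filtrations --- a nontrivial theorem in its own right, and the true place where the hypothesis $\sup_n m_n<\infty $ enters.
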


\begin{equation}
\qquad \sum_{k=0}^{\infty }\mu _{k}S_{M_{n}}a_{k}=f^{\left( n\right) },
\label{6}
\end{equation}

\begin{equation*}
\qquad \sum_{k=0}^{\infty }\left| \mu _{k}\right| ^{p}<\infty.
\end{equation*}
Moreover, $\left\| f\right\| _{H_{p}}\backsim \inf \left( \sum_{k=0}^{\infty
}\left| \mu _{k}\right| ^{p}\right) ^{1/p},$ where the infimum is taken over
all decomposition of $f$ of the form (\ref{6}).

\begin{lemma}
\cite{gat} Let $n>t,$ $t,n\in \mathbb{P},$ $x\in I_{t}\backslash $ $I_{t+1}$%
. Then
\end{lemma}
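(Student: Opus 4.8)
The final statement is Gát's closed-form description of the Vilenkin--Fej\'er kernel $K_{M_n}$ on the ring $I_t\setminus I_{t+1}$, so the plan is to reduce everything to Dirichlet kernels and then exploit the localization and geometric-sum identities already recorded in Section~2. I would start from $K_{M_n}=\frac{1}{M_n}\sum_{k=1}^{M_n}D_k$ and substitute the product representation (\ref{3a}) for each $D_k$, turning the kernel into a double sum over the Fourier index $k$ and over the block index $j$, with the character $\psi_k(x)=\prod_i r_i^{k_i}(x)$ pulled out in front of each term.

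The first simplification comes from the localization (\ref{3}). For a fixed $x\in I_t\setminus I_{t+1}$ one has $x\in I_j$ exactly when $j\le t$, hence $D_{M_j}(x)=M_j$ for $j\le t$ and $D_{M_j}(x)=0$ for $j>t$; thus in (\ref{3a}) only the blocks $j=0,\dots,t$ survive. Since moreover $r_i(x)=1$ for every $i<t$ (because $x_0=\dots=x_{t-1}=0$ on the ring), the inner Rademacher sums attached to the blocks $j<t$ collapse to mere digit counts, and a single genuinely oscillating factor remains, coming from the $t$-th block, namely $\sum_{p=m_t-k_t}^{m_t-1}r_t^{p}(x)$.

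Next I would perform the summation over $k$ by splitting its digits into a low part ($k_0,\dots,k_t$) and a high part ($k_{t+1},\dots,k_{n-1}$). The high digits enter only through $\psi_k(x)$, so summing them produces a product $\prod_{i>t}\bigl(\sum_{k_i=0}^{m_i-1}r_i^{k_i}(x)\bigr)$, which by (\ref{1a}) equals $M_n/M_{t+1}$ when $x_i=0$ for all $t<i<n$ and equals $0$ otherwise; this is exactly where the dependence on the higher coordinates of $x$ is resolved into the dichotomy ``$x-x_te_t\in I_n$ or not.'' The remaining low-digit sum is a geometric progression in the factor $r_t(x)$, and summing $\sum_p r_t^{p}(x)$ (legitimate since $r_t(x)\neq1$ on $I_t\setminus I_{t+1}$) manufactures the denominator $1-r_t(x)$, yielding $K_{M_n}(x)=M_t/(1-r_t(x))$ in the surviving case. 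The case $x\in I_n$ is handled separately and is easiest: there $\psi_k(x)\equiv1$ for $k<M_n$, so $D_k(x)=k$ and $K_{M_n}(x)=(M_n+1)/2$.

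The routine parts are the elementary digit counting and the two explicit summations (arithmetic and geometric). The main obstacle is the clean execution of the high-digit collapse via (\ref{1a}): one must recognize that this product forces the vanishing of all coordinates $x_{t+1},\dots,x_{n-1}$, so that the kernel is nonzero on the ring only when $x-x_te_t\in I_n$, and then verify that the surviving multiplicity $M_n/M_{t+1}$ combines correctly with the prefactor $1/M_n$ and with the geometric sum to leave precisely $M_t/(1-r_t(x))$. A minor but necessary bookkeeping point is the shift between $\sum_{k=1}^{M_n}$ and $\sum_{k=0}^{M_n-1}D_k$, which only contributes a controlled boundary term and does not affect the stated expression.
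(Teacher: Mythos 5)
Your plan is sound, and it is in substance G\'at's original computation: note that the paper does not prove this lemma at all (it is quoted from \cite{gat}), but the very machinery you use --- expanding $D_k$ via (\ref{3a}), localizing with (\ref{3}), killing oscillating blocks with (\ref{1a}), and splitting the digits of $k$ at position $t$ --- is exactly what the paper itself deploys in its proof of Lemma 6, of which your argument is in effect the special case $s=m_n$ pushed one step further to extract the nonzero value. Your bookkeeping checks out: the boundary term is in fact exactly zero, since $\sum_{k=1}^{M_n}D_k=\sum_{k=0}^{M_n-1}D_k+D_{M_n}$ and $D_{M_n}(x)=0$ because $x\in I_t\setminus I_{t+1}$ with $t<n$ gives $x\notin I_n$; the high-digit product equals $M_n/M_{t+1}$ precisely when $x_{t+1}=\dots=x_{n-1}=0$, i.e.\ when $x-x_te_t\in I_n$; and the surviving constant works out, because the low digits $k_0,\dots,k_{t-1}$ contribute a factor $M_t$, the prefactor $D_{M_t}(x)=M_t$ appears, and with $z=r_t(x)$ ($z^{m_t}=1$, $z\neq 1$) the genuine double sum over the $t$-th digit is $\sum_{k_t=0}^{m_t-1}z^{k_t}\sum_{p=m_t-k_t}^{m_t-1}z^{p}=\frac{1}{z-1}\sum_{k_t=0}^{m_t-1}\left(z^{k_t}-1\right)=\frac{m_t}{1-z}$, so that $\frac{1}{M_n}\cdot M_t\cdot M_t\cdot\frac{m_t}{1-z}\cdot\frac{M_n}{M_{t+1}}=\frac{M_t}{1-r_t(x)}$, as claimed. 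Be aware this last step is not a single geometric progression, as your sketch suggests, but the two-stage computation above.

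Two corrections are needed. First, your separate ``case $x\in I_n$'' is vacuous: the hypothesis $x\in I_t\setminus I_{t+1}$ with $t<n$ forces $x\notin I_{t+1}\supset I_n$, so the dichotomy in the lemma concerns the modified point $x-x_te_t$, never $x$ itself --- your main computation already resolves it correctly, and the extra paragraph (with its value $(M_n+1)/2$) should be deleted, since it reflects a misreading of the statement. Second, you must say explicitly why the ``digit-count'' part of the expansion, the terms $\psi_k(x)\sum_{j=0}^{t-1}k_jM_j$, disappears after summation: the factor $\sum_{k_t=0}^{m_t-1}r_t^{k_t}(x)=0$ by (\ref{1a}), since $x_t\neq 0$ on $I_t\setminus I_{t+1}$; this is the same mechanism the paper isolates as $J_1=0$ in the proof of Lemma 6, and your sketch currently elides it. With these two points repaired, your argument is complete and coincides with the method underlying the cited result.
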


\begin{equation*}
K_{M_{n}}\left( x\right) =\left\{
\begin{array}{ll}
0, & \text{if }x-x_{t}e_{t}\notin I_{n}, \\
\frac{M_{t}}{1-r_{t}\left( x\right) }, & \text{if }x-x_{t}e_{t}\in I_{n}.%
\end{array}%
\right.
\end{equation*}

\begin{lemma}
\cite{tep2, tep3} \ Let $x\in I_{N}^{k,l},$ $k=0,\dots,N-2,$ $%
l=k+1,\dots,N-1.$ Then
\end{lemma}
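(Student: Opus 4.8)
The plan is to read the kernel's behaviour on the thin sets $I_{N}^{k,l}$ directly off the preceding (Gát) lemma and then, if the general order is needed, to bootstrap from the distinguished order $M_{N}$. First I would observe that a point $x\in I_{N}^{k,l}$ has its first nonzero coordinate at position $k$, i.e. $x\in I_{k}\backslash I_{k+1}$, so the preceding lemma applies with $t=k$ and $n=N$ (legitimate since $k\leq N-2<N$). Its dichotomy — whether $x-x_{k}e_{k}\in I_{N}$ or not — translates exactly into the position of the \emph{second} nonzero coordinate: the condition $x-x_{k}e_{k}\in I_{N}$ forces $x_{k+1}=\dots =x_{N-1}=0$, which is precisely the excluded case $l=N$. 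In the range $k<l<N$ treated here, $x_{l}\neq 0$ with $l\leq N-1$ keeps $x-x_{k}e_{k}$ outside $I_{N}$, so the Gát formula yields $K_{M_{N}}(x)=0$; in the complementary case one would instead get $K_{M_{N}}(x)=M_{k}/(1-r_{k}(x))$, where the hypothesis $\sup_{n}m_{n}<\infty$ is essential because it gives $|1-r_{k}(x)|\geq c>0$ (as $x_{k}\neq 0$), whence $|K_{M_{N}}(x)|\leq cM_{k}$.

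Second, to reach the estimate for the means $\sigma _{n}$ of arbitrary order — which is what the proof of the main theorem actually consumes — I would pass from $K_{M_{N}}$ to the general Fejér kernel $K_{n}$. Writing $n=\sum_{j}n_{j}M_{j}$ and using the Dirichlet identities (\ref{3}), (\ref{3a}) together with the Rademacher sum (\ref{1a}), the kernel $K_{n}$ splits into blocks built from $D_{M_{j}}$ and the localized kernels $K_{M_{j}}$. On $I_{N}^{k,l}$ the rigid digit pattern (zeros off the positions $k$ and $l$, free tail) annihilates $D_{M_{j}}(x)$ for most indices by (\ref{3}), so the surviving contributions concentrate at the few scales $j\leq l$ attached to the two nonzero digits, and the uniform $L_{1}$-bound (\ref{4}) controls the averaged remainders. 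Collecting these contributions scale by scale produces the asserted pointwise bound on $I_{N}^{k,l}$, which one then integrates against the partition (\ref{2}) of $\overline{I_{N}}$ in the sequel.

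The main obstacle I anticipate is not the localization, which is essentially forced by (\ref{3}), but the honest use of cancellation in the oscillatory Rademacher sums $\sum_{p=m_{j}-n_{j}}^{m_{j}-1}r_{j}^{p}$ appearing in (\ref{3a}): bounding them crudely by $m_{j}$ would lose the decay in the second index $l$ that a sharp estimate requires, so one must exploit their geometric structure to extract the correct power of $M_{l}$ relative to $M_{N}$. A secondary technical point is uniformity of the constants in $N$, $k$, and $l$; this is exactly where boundedness of the Vilenkin group enters, guaranteeing both the lower bound on $|1-r_{k}(x)|$ and the summability of the geometric tails, and it is the reason the method is tied to $\sup_{n}m_{n}<\infty$.
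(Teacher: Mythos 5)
First, a point of order: the paper gives no proof of this lemma at all; it is quoted from \cite{tep2,tep3}, so your attempt can only be measured against the argument given there. Your first paragraph reproduces correctly the easy part of that argument: for $x\in I_{N}^{k,l}$ one has $x\in I_{k}\setminus I_{k+1}$, and Lemma 2 applied with $t=k$, $n=N$ gives $K_{M_{N}}(x)=0$ when $k<l<N$ (since $x_{l}\neq 0$ forces $x-x_{k}e_{k}\notin I_{N}$), while for $l=N$ it gives $\left\vert K_{M_{N}}(x)\right\vert =M_{k}/\left\vert 1-r_{k}(x)\right\vert \leq cM_{k}$, boundedness $\sup_{n}m_{n}<\infty$ supplying $\left\vert 1-r_{k}(x)\right\vert \geq c>0$. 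But this concerns the single kernel $K_{M_{N}}$, whereas the lemma asserts a bound for \emph{every} $n\geq M_{N}$, with the decay $1/n$ in the first estimate; that is the entire content, and your second paragraph does not prove it. You yourself flag the extraction of "the correct power of $M_{l}$" as an unresolved obstacle — but that obstacle \emph{is} the lemma, so what you have is a gap, not a proof.

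The missing mechanism, as carried out in \cite{tep2,tep3}, is a scale-by-scale argument, not a cancellation argument. One decomposes $nK_{n}$ as in Lemma 4 into kernels $s_{i}M_{n_{i}}K_{s_{i}M_{n_{i}}}$ and Dirichlet terms $n^{(i)}D_{s_{i}M_{n_{i}}}$; by Lemma 5 and (\ref{3}) the Dirichlet terms vanish at $y=x-t$ ($t\in I_{N}$) for every scale $n_{i}>k$ and contribute at most $cM_{k}^{2}$ in total, while the G\'at dichotomy — extended from $K_{M_{A}}$ to $K_{sM_{A}}$, which is exactly what Lemma 6 together with its quantitative counterpart provides — gives, at each scale $A$: $\left\vert K_{sM_{A}}(y)\right\vert \leq cM_{A}$ for $A\leq k$, $\left\vert K_{sM_{A}}(y)\right\vert \leq cM_{k}$ for $k<A\leq l$, and $K_{sM_{A}}(y)=0$ for $A>l$. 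Hence
\begin{equation*}
n\left\vert K_{n}\left( x-t\right) \right\vert \leq c\left( \sum_{A\leq k}M_{A}^{2}+M_{k}\sum_{k<A\leq l}M_{A}\right) \leq cM_{k}M_{l},
\end{equation*}
and integrating over $I_{N}$, $\mu \left( I_{N}\right) =1/M_{N}$, yields the first bound; when $l=N$ the middle regime runs up to $\left\vert n\right\vert$, giving $n\left\vert K_{n}(x-t)\right\vert \leq cM_{k}M_{\left\vert n\right\vert }\leq cM_{k}n$ and hence the second bound. Note where $M_{l}$ actually comes from: the geometric sum $\sum_{k<A\leq l}M_{A}\leq cM_{l}$ over the scales at which the kernel survives. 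Your proposal instead attributes it to cancellation in the Rademacher sums of (\ref{3a}) — crude bounds on those sums are in fact sufficient — and your appeal to (\ref{4}) is inert: a global $L^{1}$ bound on $K_{n}$ can produce neither the localization to $I_{N}^{k,l}$ nor the factor $1/n$.
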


\begin{equation*}
\int_{I_{N}}\left\vert K_{n}\left( x-t\right) \right\vert d\mu \left(
t\right) \leq \frac{cM_{l}M_{k}}{nM_{N}},\text{ \ \ when }n\geq M_{N}.
\end{equation*}

Let $x\in I_{N}^{k,N},$ $k=0,\dots,N-1.$ Then

\begin{equation*}
\int_{I_{N}}\left\vert K_{n}\left( x-t\right) \right\vert d\mu \left(
t\right) \leq \frac{cM_{k}}{M_{N}},\qquad \text{when }n\geq M_{N}.
\end{equation*}%
\qquad

\begin{lemma}
Let $n=\sum_{i=1}^{r}s_{i}M_{n_{i}}$, where $n_{1}>n_{2}>\dots >n_{r}\geq 0$
and $1\leq s_{i}<m_{n_{i}}$ for all $1\leq i\leq r$ as well as $%
n^{(k)}=n-\sum_{i=1}^{k}s_{i}M_{n_{i}}$, where $0<k\leq r$. Then
\begin{equation*}
nK_{n}=\sum_{k=1}^{r}\left( \prod_{j=1}^{k-1}r_{n_{j}}^{s_{j}}\right)
s_{k}M_{n_{k}}K_{s_{k}M_{n_{k}}}+\sum_{k=1}^{r-1}\left(
\prod_{j=1}^{k-1}r_{n_{j}}^{s_{j}}\right) n^{(k)}D_{s_{k}M_{n_{k}}}.
\end{equation*}
\end{lemma}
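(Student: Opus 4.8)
The plan is to start from the identity $nK_{n}=\sum_{l=1}^{n}D_{l}$, which is immediate from the definition $K_{n}=\frac{1}{n}\sum_{k=1}^{n}D_{k}$, and to evaluate the right-hand side by peeling off the blocks $s_{i}M_{n_{i}}$ of $n$ one at a time. The engine of the argument is the elementary splitting identity
\[
D_{sM_{N}+j}=D_{sM_{N}}+r_{N}^{s}D_{j},\qquad 0\leq j<M_{N},\ 0\leq s<m_{N},
\]
which follows from the multiplicativity of the Vilenkin functions: for $0\leq t<M_{N}$ the digits of $t$ all sit below position $N$, so no carry occurs and $\psi _{sM_{N}+t}=\psi _{sM_{N}}\psi _{t}=r_{N}^{s}\psi _{t}$. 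Splitting $\sum_{k=0}^{sM_{N}+j-1}\psi _{k}$ at $k=sM_{N}$ and factoring $r_{N}^{s}$ out of the tail then yields the displayed identity.

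Next I would peel off the leading block. Writing $n=s_{1}M_{n_{1}}+n^{(1)}$ with $n^{(1)}=\sum_{i=2}^{r}s_{i}M_{n_{i}}<M_{n_{1}}$, I split $\sum_{l=1}^{n}D_{l}$ into $\sum_{l=1}^{s_{1}M_{n_{1}}}D_{l}$ and $\sum_{j=1}^{n^{(1)}}D_{s_{1}M_{n_{1}}+j}$. The first sum is $s_{1}M_{n_{1}}K_{s_{1}M_{n_{1}}}$ directly. To the second I apply the splitting identity termwise (legitimate since $n^{(1)}<M_{n_{1}}$ and $s_{1}<m_{n_{1}}$), obtaining $n^{(1)}D_{s_{1}M_{n_{1}}}+r_{n_{1}}^{s_{1}}\sum_{j=1}^{n^{(1)}}D_{j}=n^{(1)}D_{s_{1}M_{n_{1}}}+r_{n_{1}}^{s_{1}}\,n^{(1)}K_{n^{(1)}}$. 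This gives the one-step recursion
\[
nK_{n}=s_{1}M_{n_{1}}K_{s_{1}M_{n_{1}}}+n^{(1)}D_{s_{1}M_{n_{1}}}+r_{n_{1}}^{s_{1}}\,n^{(1)}K_{n^{(1)}}.
\]

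Finally I would close the argument by induction on the number $r$ of blocks, since the last term $r_{n_{1}}^{s_{1}}\,n^{(1)}K_{n^{(1)}}$ has exactly the form of $nK_{n}$ for the number $n^{(1)}$, whose expansion has one fewer block. The base case $r=1$ (where $n=s_{1}M_{n_{1}}$ and $n^{(1)}=0$) gives $nK_{n}=s_{1}M_{n_{1}}K_{s_{1}M_{n_{1}}}$, matching the claim. In the inductive step the prefactor $r_{n_{1}}^{s_{1}}$ multiplies every term produced from $n^{(1)}$, which is exactly how the products $\prod_{j=1}^{k-1}r_{n_{j}}^{s_{j}}$ accumulate; and the truncation of the second sum at $k=r-1$ reflects that $n^{(r)}=0$, so the final block contributes only the $K$-term $s_{r}M_{n_{r}}K_{s_{r}M_{n_{r}}}$ and no $D$-term. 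There is no genuine obstacle here—the work is bookkeeping: one must check at each stage that $n^{(k-1)}=s_{k}M_{n_{k}}+n^{(k)}$ with $n^{(k)}<M_{n_{k}}$ (which holds because the top nonzero position of $n^{(k)}$ is $n_{k+1}<n_{k}$) so that the splitting identity applies with $N=n_{k}$. The single computational ingredient worth isolating as its own step is the splitting identity $D_{sM_{N}+j}=D_{sM_{N}}+r_{N}^{s}D_{j}$ derived from the product structure of $\psi _{n}$.
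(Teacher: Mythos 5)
Your proof is correct and follows essentially the same route as the paper's: the same splitting identity $D_{sM_{n}+k}=D_{sM_{n}}+r_{n}^{s}D_{k}$, the same peeling of the leading block to get $nK_{n}=s_{1}M_{n_{1}}K_{s_{1}M_{n_{1}}}+n^{(1)}D_{s_{1}M_{n_{1}}}+r_{n_{1}}^{s_{1}}n^{(1)}K_{n^{(1)}}$, and the same final observation that $n^{(r)}=0$ eliminates the residual $K$-term. The only cosmetic difference is that you package the repeated unfolding as a formal induction on the number of blocks $r$, whereas the paper iterates the recursion informally.
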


\begin{proof}
It is easy to see that if $k,s,n\in \mathbb{P},\ 0\leq k<M_{n}$, then
\begin{equation*}
D_{k+sM_{n}}=D_{sM_{n}}+\sum_{i=sM_{n}}^{sM_{n}+k-1}\psi
_{i}=D_{sM_{n}}+\sum_{i=0}^{k-1}\psi _{i+sM_{n}}=D_{sM_{n}}+r_{n}^{s}D_{k}.
\end{equation*}%
With help of this fact we get
\begin{eqnarray*}
nK_{n}
&=&\sum_{k=1}^{n}D_{k}=\sum_{k=1}^{s_{1}M_{n_{1}}}D_{k}+%
\sum_{k=s_{1}M_{n_{1}}+1}^{n}D_{k} \\
&=&s_{1}M_{n_{1}}K_{s_{1}M_{n_{1}}}+\sum_{k=1}^{n^{(1)}}D_{k+s_{1}M_{n_{1}}}
\\
&=&s_{1}M_{n_{1}}K_{s_{1}M_{n_{1}}}+\sum_{k=1}^{n^{(1)}}\left(
D_{s_{1}M_{n_{1}}}+r_{n_{1}}^{s_{1}}D_{k}\right) \\
&=&s_{1}M_{n_{1}}K_{s_{1}M_{n_{1}}}+n^{(1)}D_{s_{1}M_{n_{1}}}+r_{n_{1}}^{s_{1}}n^{(1)}K_{n^{(1)}}.
\end{eqnarray*}%
If we unfold $n^{(1)}K_{n^{(1)}}$ in similar way, we have
\begin{equation*}
n^{(1)}K_{n^{(1)}}=s_{2}M_{n_{2}}K_{s_{2}M_{n_{2}}}+n^{(2)}D_{s_{2}M_{n_{2}}}+r_{n_{2}}^{s_{2}}n^{(2)}K_{n^{(2)}},
\end{equation*}%
so
\begin{eqnarray*}
nK_{n}
&=&s_{1}M_{n_{1}}K_{s_{1}M_{n_{1}}}+r_{n_{1}}^{s_{1}}s_{2}M_{n_{2}}K_{s_{2}M_{n_{2}}}+r_{n_{1}}^{s_{1}}r_{n_{2}}^{s_{2}}n^{(2)}K_{n^{(2)}}
\\
&&+n^{(1)}D_{s_{1}M_{n_{1}}}+r_{n_{1}}^{s_{1}}n^{(2)}D_{s_{2}M_{n_{2}}}.
\end{eqnarray*}%
Using this method with $n^{(2)}K_{n^{(2)}},\dots ,n^{(r-1)}K_{n^{(r-1)}}$,
we obtain
\begin{eqnarray*}
nK_{n} &=&\sum_{k=1}^{r}\left( \prod_{j=1}^{k-1}r_{n_{j}}^{s_{j}}\right)
s_{k}M_{n_{k}}K_{s_{k}M_{n_{k}}}+\left(
\prod_{j=1}^{r}r_{n_{j}}^{s_{j}}\right) n^{(r)}K_{n^{(r)}} \\
&&+\sum_{k=1}^{r-1}\left( \prod_{j=1}^{k-1}r_{n_{j}}^{s_{j}}\right)
n^{(k)}D_{s_{k}M_{n_{k}}}.
\end{eqnarray*}%
According to $n^{(r)}=0$ it yields the statement of the Lemma 4.
\end{proof}

\begin{lemma}
\cite{blahota} Let $s,n\in \mathbb{P}$. Then
\begin{equation*}
D_{sM_{n}}=D_{M_{n}}\sum_{k=0}^{s-1}\psi
_{kM_{n}}=D_{M_{n}}\sum_{k=0}^{s-1}r_{n}^{k}.
\end{equation*}
\end{lemma}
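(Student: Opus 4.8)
The plan is to evaluate $D_{sM_n}$ directly from the definition $D_{sM_n}=\sum_{t=0}^{sM_n-1}\psi_t$ by cutting the index range into $s$ consecutive blocks of length $M_n$. Writing each $t\in\{0,\dots,sM_n-1\}$ uniquely as $t=kM_n+i$ with $0\le k\le s-1$ and $0\le i\le M_n-1$, I would rewrite
\begin{equation*}
D_{sM_n}=\sum_{k=0}^{s-1}\sum_{i=0}^{M_n-1}\psi_{kM_n+i}.
\end{equation*}

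The decisive step is the multiplicativity of the Vilenkin characters across this splitting. Since $i<M_n$, the $m$-adic digits of $i$ occupy only the positions $0,\dots,n-1$, whereas those of $kM_n$ occupy only positions $\ge n$; thus $kM_n+i$ involves no carries and the definition $\psi_j=\prod_l r_l^{j_l}$ factors as $\psi_{kM_n+i}=\psi_{kM_n}\psi_i$. Inserting this and using $\sum_{i=0}^{M_n-1}\psi_i=D_{M_n}$ I obtain
\begin{equation*}
D_{sM_n}=\Big(\sum_{k=0}^{s-1}\psi_{kM_n}\Big)\sum_{i=0}^{M_n-1}\psi_i=D_{M_n}\sum_{k=0}^{s-1}\psi_{kM_n},
\end{equation*}
which is the first claimed identity.

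For the second identity I would read off the digits of $kM_n$ once more: for $0\le k<m_n$ its unique nonzero digit equals $k$ and sits in position $n$, so $\psi$ collapses to the single Rademacher factor $\psi_{kM_n}=r_n^k$, and termwise substitution turns $D_{M_n}\sum_{k=0}^{s-1}\psi_{kM_n}$ into $D_{M_n}\sum_{k=0}^{s-1}r_n^k$. Alternatively one can prove $D_{sM_n}=D_{M_n}\sum_{k=0}^{s-1}r_n^k$ by induction on $s$, passing from $D_{sM_n}$ to $D_{(s+1)M_n}$ via the very factorization $\psi_{sM_n+i}=r_n^s\psi_i$ that already appears in the proof of Lemma 4.

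Since the computation is short, the only genuine point requiring care is the digit bookkeeping behind $\psi_{kM_n+i}=\psi_{kM_n}\psi_i$ and $\psi_{kM_n}=r_n^k$: both hinge on the absence of $m$-adic carries, and it is precisely this that keeps the clean Rademacher form within the range $k<m_n$ (for $k=m_n$ the sum collapses instead directly by (\ref{1a})). I expect this indexing argument, rather than any analytic estimate, to be the main thing to get right.
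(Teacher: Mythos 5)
Your proof is correct, but note that there is nothing in the paper to compare it against line by line: Lemma 5 is stated with the citation \cite{blahota} and the paper gives no proof of it at all. Your argument --- splitting $\{0,\dots ,sM_{n}-1\}$ into $s$ blocks of length $M_{n}$ and using the carry-free factorization $\psi _{kM_{n}+i}=\psi _{kM_{n}}\psi _{i}$ for $0\leq i<M_{n}$ --- is the standard one, and exactly this device already appears inside the paper's proof of Lemma 4, where $D_{k+sM_{n}}=D_{sM_{n}}+r_{n}^{s}D_{k}$ is obtained from $\psi _{i+sM_{n}}=r_{n}^{s}\psi _{i}$; so your route is fully consistent with the paper's toolkit. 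One point you raise deserves to be made sharper. The first equality $D_{sM_{n}}=D_{M_{n}}\sum_{k=0}^{s-1}\psi _{kM_{n}}$ holds for every $s\in \mathbb{P}$, but the second one genuinely needs $s\leq m_{n}$: already for $s=m_{n}+1$ the term $k=m_{n}$ gives $D_{M_{n}}\psi _{m_{n}M_{n}}=D_{M_{n}}r_{n+1}$, which differs from $D_{M_{n}}r_{n}^{m_{n}}=D_{M_{n}}$ on the positive-measure set $\{x\in I_{n}:x_{n+1}\neq 0\}$, so the lemma as stated for arbitrary $s\in \mathbb{P}$ is slightly too generous. Your parenthetical about $k=m_{n}$ has the right instinct but is slightly misplaced: for $s=m_{n}$ every index still satisfies $k\leq m_{n}-1$, so nothing special happens there and (\ref{1a}) is not needed; the failure begins only at $s>m_{n}$. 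This caveat is harmless for the paper, since every application takes $s<m_{n}$ ($s=s_{k}<m_{n_{k}}$ in Lemma 4, and $s<m_{n}$ in Lemma 6), which is precisely the range your digit bookkeeping covers.
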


\begin{lemma}
Let $s,t,n\in \mathbb{N},\ n>t,\ s<m_{n},\ x\in I_{t}\backslash I_{t+1} $.
If $x-x_{t}e_{t}\notin I_{n}$, then
\begin{equation*}
K_{sM_{n}}(x)=0.
\end{equation*}
\end{lemma}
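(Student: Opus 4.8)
The plan is to reduce $K_{sM_n}$ to a combination of the Dirichlet kernels $D_{pM_n}$ and the single Fej\'er kernel $K_{M_n}$, each of which we can evaluate on the set described by the hypotheses. First I would split the defining sum $sM_nK_{sM_n}=\sum_{j=1}^{sM_n}D_j$ into $s$ consecutive blocks of length $M_n$, writing $j=pM_n+k$ with $p=0,\dots,s-1$ and $k=1,\dots,M_n$. Within each block I would apply the elementary Dirichlet identity $D_{pM_n+k}=D_{pM_n}+r_n^pD_k$, which is exactly the relation established at the beginning of the proof of Lemma 4. Summing over $k$ turns the inner sum into $M_nD_{pM_n}+r_n^p\sum_{k=1}^{M_n}D_k=M_n\left(D_{pM_n}+r_n^pK_{M_n}\right)$, and summing over $p$ then yields the key decomposition
\begin{equation*}
sM_nK_{sM_n}=M_n\sum_{p=0}^{s-1}D_{pM_n}+M_nK_{M_n}\sum_{p=0}^{s-1}r_n^p.
\end{equation*}

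Next I would evaluate this identity at the point $x$. Since $n>t$ and $x\in I_t\backslash I_{t+1}$, the coordinate $x_t\neq 0$ forces $x\notin I_{t+1}\supseteq I_n$, so $D_{M_n}(x)=0$ by (\ref{3}). By Lemma 5 every term satisfies $D_{pM_n}(x)=D_{M_n}(x)\sum_{i=0}^{p-1}r_n^i(x)=0$, so the entire first sum vanishes identically on $x$. It therefore remains only to control the factor $K_{M_n}(x)$ appearing in the second term.

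This is where the remaining hypothesis enters: because $x\in I_t\backslash I_{t+1}$ with $n>t$, Lemma 2 is applicable and states that $K_{M_n}(x)=0$ precisely when $x-x_te_t\notin I_n$, which is exactly our assumption. Substituting $K_{M_n}(x)=0$ into the decomposition leaves $sM_nK_{sM_n}(x)=0$, and since $s\geq 1$ and $M_n\geq 1$ we conclude $K_{sM_n}(x)=0$, as claimed.

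I expect the only delicate point to be the bookkeeping in the block decomposition, in particular checking that the additive identity $D_{pM_n+k}=D_{pM_n}+r_n^pD_k$ remains valid at the block boundary $k=M_n$ (it does, as a short verification using Lemma 5 shows, since $D_{(p+1)M_n}-r_n^pD_{M_n}=D_{pM_n}$), so that no stray boundary terms survive to obstruct the clean formula. Once the decomposition is secured, the evaluation is immediate from (\ref{3}), Lemma 5, and Lemma 2, with the hypothesis $x-x_te_t\notin I_n$ used solely to annihilate $K_{M_n}(x)$.
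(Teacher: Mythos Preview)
Your proof is correct and takes a genuinely different route from the paper. The paper follows G\'at's original computation directly: it expands $sM_nK_{sM_n}(x)=\sum_{k=1}^{sM_n}D_k(x)$ via the Rademacher representation (\ref{3a}) of $D_k$, splits the result into two multi-sums $J_1$ and $J_2$, and then kills each by locating a vanishing character sum $\sum_{k_l=0}^{m_l-1}r_l^{k_l}(x)$ --- once from $x_t\neq 0$ and once from the hypothesis $x-x_te_t\notin I_n$. Your argument instead packages the same information into already-proved lemmas: the block decomposition gives $sM_nK_{sM_n}=M_n\sum_{p=0}^{s-1}D_{pM_n}+M_nK_{M_n}\sum_{p=0}^{s-1}r_n^p$, after which Lemma~5 together with (\ref{3}) annihilates the Dirichlet part and Lemma~2 annihilates the Fej\'er part. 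This is cleaner and makes the logical dependence on the hypotheses more transparent (the condition $x\notin I_n$ kills the $D_{pM_n}$ terms, the condition $x-x_te_t\notin I_n$ kills $K_{M_n}$), at the cost of relying on Lemma~2 as a black box rather than re-deriving its mechanism. Your boundary check at $k=M_n$ via Lemma~5 is correct and necessary, since the identity from Lemma~4 is only stated for $k<M_n$.
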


\begin{proof}
In \cite{gat} G. G\'at proved similar statement to $K_{M_{n}}(x)=0$. We will
use his method. Let $x\in I_{t}\backslash I_{t+1}$. Using (\ref{3}) and (\ref%
{3a}) we have
\begin{eqnarray*}
sM_{n}K_{sM_{n}}\left( x\right)
&=&\sum_{k=1}^{sM_{n}}D_{k}(x)=\sum_{k=1}^{sM_{n}}\psi _{k}(x)\left(
\sum_{j=0}^{t-1}k_{j}M_{j}+M_{t}\sum_{i=m_{t}-k_{t}}^{m_{t}-1}r_{t}^{i}(x)%
\right) \\
&=&\sum_{k=1}^{sM_{n}}\psi
_{k}(x)\sum_{j=0}^{t-1}k_{j}M_{j}+\sum_{k=1}^{sM_{n}}\psi
_{k}(x)M_{t}\sum_{i=m_{t}-k_{t}}^{m_{t}-1}r_{t}^{i}(x) \\
&=&J_{1}+J_{2}.
\end{eqnarray*}%
Let $k:=\sum_{j=0}^{n}k_{j}M_{j}$. Applying (\ref{1a}) we get $%
\sum_{k_{t}=0}^{m_{t}-1}r_{t}^{k_{t}}(x)=0$, for $x\in I_{t}\backslash
I_{t+1}$. It follows that%
\begin{equation*}
J_{1}=\sum_{k_{0}=0}^{m_{0}-1}\cdots
\sum_{k_{t-1}=0}^{m_{t-1}-1}\sum_{k_{t+1}=0}^{m_{t+1}-1}\cdots
\sum_{k_{n-1}=0}^{m_{n-1}-1}\sum_{k_{n}=0}^{s-1}\left( \prod_{\substack{ l=0
\\ l\neq t}}^{n}r_{l}^{k_{l}}(x)\right)
\sum_{j=0}^{t-1}k_{j}M_{j}\sum_{k_{t}=0}^{m_{t}-1}r_{t}^{k_{t}}(x)=0.
\end{equation*}%
On the other hand
\begin{eqnarray*}
J_{2}&=&\sum_{k_{0}=0}^{m_{0}-1}\cdots
\sum_{k_{t-1}=0}^{m_{t-1}-1}\sum_{k_{t+1}=0}^{m_{t+1}-1}\cdots
\sum_{k_{n-1}=0}^{m_{n-1}-1}\sum_{k_{n}=0}^{s-1}\left( \prod_{\substack{ l=0
\\ l\neq t}}^{n}r_{l}^{k_{l}}(x)\right) M_{t}\sum_{i=0}^{k_{t}-1}r_{t}^{i}(x)
\\
&=&\prod_{\substack{ l=0  \\ l\neq t}}^{n-1}\left(
\sum_{k_{l}=0}^{m_{l}-1}r_{l}^{k_{l}}(x)\right) \left(
\sum_{k_{p}=0}^{s}r_{p}^{k_{p}}(x)\right)
M_{t}\sum_{i=0}^{k_{t}-1}r_{t}^{i}(x).
\end{eqnarray*}%
Since $x-x_{t}e_{t}\notin I_{n}$, at least one of $%
\sum_{k_{l}=0}^{m_{l}-1}r_{l}^{k_{l}}(x)$ will be zero, if $l=p\neq t$ and $%
0\leq p\leq n-1$, that is $J_{2}=0$.
\end{proof}

\section{Proof of the Theorems}

\textbf{Proof of Theorem 1. }By Lemma 1, the proof of Theorem 1 will be
complete, if we show that with a constant $c_{p}$

\begin{equation*}
\frac{1}{\log ^{\left[ 1/2+p\right] }n}\overset{n}{\underset{k=1}{\sum }}%
\frac{\left\Vert \sigma _{k}a\right\Vert _{p}^{p}}{k^{2-2p}}\leq
c_{p}<\infty \ (n=2,3,\dots).
\end{equation*}%
for every p-atom $a,$ where $\left[ 1/2+p\right] $ denotes the integers part
of $1/2+p.$ We may assume that $a$ be an arbitrary p-atom with support$\ I$,
$\mu \left( I\right) =M_{N}^{-1}$ and $I=I_{N}.$ It is easy to see that $%
\sigma _{n}\left( a\right) =0,$ when $n\leq M_{N}$. Therefore we can suppose
that $n>M_{N}$.

Let $x\in I_{N}.$ Since $\sigma _{n}$ is bounded from $L_{\infty }$ to $%
L_{\infty }$ (the boundedness follows from (\ref{4})) and $\left\Vert
a\right\Vert _{\infty }\leq cM_{N}^{1/p}$ we obtain
\begin{equation*}
\int_{I_{N}}\left\vert \sigma _{m}a\left( x\right) \right\vert ^{p}d\mu
\left( x\right) \leq c\left\Vert a\right\Vert _{\infty }^{p}/M_{N}\leq
c_{p}<\infty ,\text{ \ }0<p\leq 1/2.
\end{equation*}%
Hence
\begin{eqnarray}
&&\frac{1}{\log ^{\left[ 1/2+p\right] }n}\overset{n}{\underset{m=1}{\sum }}%
\frac{\int_{I_{N}}\left\vert \sigma _{m}a\left( x\right) \right\vert
^{p}d\mu \left( x\right) }{m^{2-2p}}  \label{6aaaa} \\
&\leq &\frac{c}{\log ^{\left[ 1/2+p\right] }n}\overset{n}{\underset{m=1}{%
\sum }}\frac{1}{m^{2-2p}}\leq c_{p}<\infty .  \notag
\end{eqnarray}

It is easy to show that
\begin{eqnarray*}
&&\left\vert \sigma _{m}a\left( x\right) \right\vert \leq
\int_{I_{N}}\left\vert a\left( t\right) \right\vert \left\vert K_{m}\left(
x-t\right) \right\vert d\mu \left( t\right) \\
&\leq &\left\Vert a \right\Vert _{\infty }\int_{I_{N}}\left\vert K_{m}\left(
x-t\right) \right\vert d\mu \left( t\right) \leq
cM_{N}^{1/p}\int_{I_{N}}\left\vert K_{m}\left( x-t\right) \right\vert d\mu
\left( t\right).
\end{eqnarray*}

Let $x\in I_{N}^{k,l},\,0\leq k<l<N.$ Then from Lemma 3 we get
\begin{equation}
\left\vert \sigma _{m}a\left( x\right) \right\vert \leq \frac{%
cM_{l}M_{k}M_{N}^{1/p-1}}{m}.  \label{12}
\end{equation}

Let $x\in I_{N}^{k,N},\,0\leq k<N.$ Then from Lemma 3 we have

\begin{equation}
\left\vert \sigma _{m}a\left( x\right) \right\vert \leq cM_{k}M_{N}^{1/p-1}.
\label{12a}
\end{equation}

Since
\begin{equation*}
\overset{N-2}{\underset{k=0}{\sum }}1/M_{k}^{1-2p}\leq N^{\left[ 1/2+p\right]
},\text{ for }0<p\leq 1/2
\end{equation*}%
by combining (\ref{2}) and (\ref{12}-\ref{12a}) we obtain%
\begin{eqnarray}
&&\int_{\overline{I_{N}}}\left\vert \sigma _{m}a\left( x\right) \right\vert
^{p}d\mu \left( x\right)  \label{7aaa} \\
&=&\overset{N-2}{\underset{k=0}{\sum }}\overset{N-1}{\underset{l=k+1}{\sum }}%
\sum\limits_{x_{j}=0,j\in
\{l+1,...,N-1\}}^{m_{j-1}}\int_{I_{N}^{k,l}}\left\vert \sigma _{m}a\left(
x\right) \right\vert ^{p}d\mu \left( x\right)  \notag \\
&&+\overset{N-1}{\underset{k=0}{\sum }}\int_{I_{N}^{k,N}}\left\vert \sigma
_{m}a\left( x\right) \right\vert ^{p}d\mu \left( x\right)  \notag \\
&\leq &c\overset{N-2}{\underset{k=0}{\sum }}\overset{N-1}{\underset{l=k+1}{%
\sum }}\frac{m_{l+1}\dots m_{N-1}}{M_{N}}\frac{\left( M_{l}M_{k}\right)
^{p}M_{N}^{1-p}}{m^{p}}+\overset{N-1}{\underset{k=0}{\sum }}\frac{1}{M_{N}}%
M_{k}^{p}M_{N}^{1-p}  \notag \\
&\leq &\frac{cM_{N}^{1-p}}{m^{p}}\overset{N-2}{\underset{k=0}{\sum }}\overset%
{N-1}{\underset{l=k+1}{\sum }}\frac{\left( M_{l}M_{k}\right) ^{p}}{M_{l}}+%
\overset{N-1}{\underset{k=0}{\sum }}\frac{M_{k}^{p}}{M_{N}^{p}}  \notag \\
&=&\frac{cM_{N}^{1-p}}{m^{p}}\overset{N-2}{\underset{k=0}{\sum }}\frac{1}{%
M_{k}^{1-2p}}\overset{N-1}{\underset{l=k+1}{\sum }}\frac{M_{k}^{1-p}}{%
M_{l}^{1-p}}+\overset{N-1}{\underset{k=0}{\sum }}\frac{M_{k}^{p}}{M_{N}^{p}}
\notag \\
&\leq &\frac{cM_{N}^{1-p}N^{\left[ 1/2+p\right] }}{m^{p}}+c_{p}.  \notag
\end{eqnarray}

It is easy to show that

\begin{equation*}
\overset{n}{\underset{m=M_{N}+1}{\sum }}\frac{1}{m^{2-p}}\leq \frac{c}{%
M_{N}^{1-p}},\text{ for }0<p\leq 1/2.
\end{equation*}

By applying (\ref{6aaaa}) and (\ref{7aaa}) we get
\begin{eqnarray*}
&&\frac{1}{\log ^{\left[ 1/2+p\right] }n}\overset{n}{\underset{m=1}{\sum }}%
\frac{\left\Vert \sigma _{m}a\right\Vert _{p}^{p}}{m^{2-2p}} \\
&\leq &\frac{1}{\log ^{\left[ 1/2+p\right] }n}\overset{n}{\underset{m=M_{N}+1%
}{\sum }}\frac{\int_{\overline{I_{N}}}\left\vert \sigma _{m}a\left( x\right)
\right\vert ^{p}d\mu \left( x\right) }{m^{2-2p}} \\
&&+\frac{1}{\log ^{\left[ 1/2+p\right] }n}\overset{n}{\underset{m=M_{N}+1}{%
\sum }}\frac{\int_{I_{N}}\left\vert \sigma _{m}a\left( x\right) \right\vert
^{p}d\mu \left( x\right) }{m^{2-2p}} \\
&\leq &\frac{1}{\log ^{\left[ 1/2+p\right] }n}\overset{n}{\underset{m=M_{N}+1%
}{\sum }}\left( \frac{c_{p}M_{N}^{1-p}N^{\left[ 1/2+p\right] }}{m^{2-p}}+%
\frac{c_{p}}{m^{2-p}}\right) +c_{p} \\
&\leq &\frac{c_{p}M_{N}^{1-p}N^{\left[ 1/2+p\right] }}{\log ^{\left[ 1/2+p%
\right] }n}\overset{n}{\underset{m=M_{N}+1}{\sum }}\frac{1}{m^{2-p}}+\frac{1%
}{\log ^{\left[ 1/2+p\right] }n}\overset{n}{\underset{m=M_{N}+1}{\sum }}%
\frac{1}{m^{2-p}}+c_{p} \\
&\leq &c_{p}<\infty .
\end{eqnarray*}

which completes the proof of Theorem 1.

\textbf{Proof of Theorem 2. }Under condition (\ref{cond}) there exists a
sequence of increasing numbers $\left\{ n_{k}:\text{ }k\geq 0\right\} $,
such that
\begin{equation*}
\underset{k\rightarrow \infty }{\lim }\frac{cn_{k}^{2-2p}}{\Phi \left(
n_{k}\right) }=\infty .
\end{equation*}

\bigskip It is evident that for every $n_{k}$ there exists a positive
integer $\lambda _{k}$ such that%
\begin{equation*}
M_{\left\vert \lambda _{k}\right\vert +1}\leq n_{k}<M_{\left\vert \lambda
_{k}\right\vert +2}\leq \lambda M_{\left\vert n_{k}\right\vert +1},
\end{equation*}%
where $\lambda =\sup_{n}m_{n}$. Since $\Phi \left( n\right) $ is a
nondecreasing function we have
\begin{equation}
\overline{\underset{k\rightarrow \infty }{\lim }}\frac{M_{\left\vert
\lambda_{k}\right\vert +1}^{2-2p}}{\Phi \left( M_{\left\vert
\lambda_{k}\right\vert +1}\right) }\geq \underset{k\rightarrow \infty }{\lim
}\frac{cn_{k}^{2-2p}}{\Phi \left( n_{k}\right) }=\infty .  \label{12j}
\end{equation}

Applying (\ref{12j}) there exists a sequence $\left\{ \alpha _{k}:\text{ }%
k\geq 0\right\} \subset \left\{ \lambda _{k}:\text{ }k\geq 0\right\} $ such
that%
\begin{equation}
\left\vert \alpha _{k}\right\vert \geq 2,\text{ for \ }k\in \mathbb{P},
\label{22}
\end{equation}%
\begin{equation}
\underset{k\rightarrow \infty }{\lim }\frac{M_{\left\vert \alpha
_{k}\right\vert }^{1-p}}{\Phi ^{1/2}\left( M_{\left\vert \alpha
_{k}\right\vert +1}\right) }=\infty  \label{22w}
\end{equation}%
and%
\begin{equation}
\sum_{\eta =0}^{\infty }\frac{\Phi ^{1/2}\left( M_{\left\vert \alpha _{\eta
}\right\vert +1}\right) }{M_{\left\vert \alpha _{\eta }\right\vert }^{1-p}}%
=m_{\left\vert \alpha _{\eta }\right\vert }^{1-p}\sum_{\eta =0}^{\infty }%
\frac{\Phi ^{1/2}\left( M_{\left\vert \alpha _{\eta }\right\vert +1}\right)
}{M_{\left\vert \alpha _{\eta }\right\vert +1}^{1-p}}<c<\infty .  \label{121}
\end{equation}

Let \qquad
\begin{equation*}
f_{A}=\sum_{\left\{ k:\text{ }\left\vert \alpha _{k}\right\vert <A\right\}
}\lambda _{k}a_{k},
\end{equation*}%
where
\begin{equation*}
\lambda _{k}=\lambda \cdot \frac{\Phi ^{1/2p}\left( M_{\left\vert \alpha
_{k}\right\vert +1}\right) }{M_{\left\vert \alpha _{k}\right\vert }^{1/p-1}}
\end{equation*}%
and

\begin{equation*}
a_{k}=\frac{M_{\left\vert \alpha _{k}\right\vert }^{1/p-1}}{\lambda }\left(
D_{M_{\left\vert \alpha _{k}\right\vert }+1}-D_{M_{\left\vert \alpha
_{k}\right\vert }}\right) ,
\end{equation*}%
where $\lambda:= \sup_{n\in \mathbb{P}}m_{n}.$ Since

\begin{equation*}
S_{M_{n}}a_{k}=\left\{
\begin{array}{ll}
a_{k}, & \left\vert \alpha _{k}\right\vert <n, \\
0, & \left\vert \alpha _{k}\right\vert \geq n,%
\end{array}%
\right.
\end{equation*}%
and

\begin{equation*}
\text{supp}(a_{k})=I_{\left\vert \alpha _{k}\right\vert },\text{ \ \ \ }%
\int_{I_{\left\vert \alpha _{k}\right\vert }}a_{k}d\mu =0,\text{ \ \ }%
\left\Vert a_{k}\right\Vert _{\infty }\leq M_{\left\vert \alpha
_{k}\right\vert }^{1/p}=\left( \text{supp }a_{k}\right) ^{-1/p}
\end{equation*}%
if we apply Lemma 1 and (\ref{121}) we conclude that $f\in H_{p}.$

It is easy to show that

\begin{equation}
\widehat{f}(j)  \label{6aa}
\end{equation}

\begin{equation*}
=\left\{
\begin{array}{ll}
\Phi ^{1/2p}\left( M_{\left\vert \alpha _{k}\right\vert +1}\right) , & \text{%
if }j\in \left\{ M_{\left\vert \alpha _{k}\right\vert },\dots ,\text{ ~}%
M_{\left\vert \alpha _{k}\right\vert +1}-1\right\} ,\text{ }k=0,1,2,\dots ,
\\
0, & \text{if }j\notin \bigcup\limits_{k=0}^{\infty }\left\{ M_{\left\vert
\alpha _{k}\right\vert },\dots ,\text{ ~}M_{\left\vert \alpha
_{k}\right\vert +1}-1\right\} .\text{ }%
\end{array}%
\right.
\end{equation*}%
By using (\ref{6aa}) we can write that

\begin{equation}
\sigma _{_{\alpha _{k}}}f=\frac{1}{\alpha _{k}}\sum_{j=1}^{M_{\left\vert
\alpha _{k}\right\vert }}S_{j}f+\frac{1}{\alpha _{k}}\sum_{j=M_{\left\vert
\alpha _{k}\right\vert }+1}^{\alpha _{k}}S_{j}f=I+II.  \label{7aa}
\end{equation}

It is simple to show that

\begin{equation*}
S_{j}f=\left\{
\begin{array}{ll}
\Phi ^{1/2p}\left( M_{\left\vert \alpha _{0}\right\vert +1}\right) , & \text{%
if }M_{\left\vert \alpha _{0}\right\vert }<j\leq M_{\left\vert \alpha
_{0}\right\vert +1} \\
0, & \text{if }0\leq j\leq M_{\left\vert \alpha _{0}\right\vert }.\text{ }%
\end{array}%
\right.
\end{equation*}

Suppose that $M_{\left\vert \alpha _{s}\right\vert }<j\leq M_{\left\vert
\alpha _{s}\right\vert +1},$ for some $s=1,2,...,k.$ Then by applying (\ref%
{6aa}) we have that
\begin{equation}
S_{j}f=\sum_{v=0}^{M_{\left\vert \alpha _{s-1}\right\vert }}\widehat{f}%
(v)w_{v}+\sum_{v=M_{\left\vert \alpha _{s}\right\vert }+1}^{j-1}\widehat{f}%
(v)w_{v}  \label{27}
\end{equation}%
\begin{equation*}
=\sum_{\eta =0}^{s-1}\sum_{v=M_{\left\vert \alpha _{\eta }\right\vert
}}^{M_{\left\vert \alpha _{\eta }\right\vert +1}-1}\widehat{f}%
(v)w_{v}+\sum_{v=M_{\left\vert \alpha _{s}\right\vert }+1}^{j-1}\widehat{f}%
(v)w_{v}
\end{equation*}%
\begin{equation*}
=\sum_{\eta =0}^{s-1}\sum_{v=M_{\left\vert \alpha _{\eta }\right\vert
}}^{M_{\left\vert \alpha _{\eta }\right\vert +1}-1}\Phi ^{1/2p}\left(
M_{\left\vert \alpha _{\eta }\right\vert +1}\right) w_{v}+\Phi ^{1/2p}\left(
M_{\left\vert \alpha _{s}\right\vert +1}\right) \sum_{v=M_{\left\vert \alpha
_{s}\right\vert }+1}^{j-1}w_{v}
\end{equation*}%
\begin{equation*}
=\sum_{\eta =0}^{s-1}\Phi ^{1/2p}\left( M_{\left\vert \alpha _{\eta
}\right\vert +1}\right) \left( D_{M_{\left\vert \alpha _{\eta }\right\vert
+1}}-D_{M_{\left\vert \alpha _{\eta }\right\vert }}\right) +\Phi
^{1/2p}\left( M_{\left\vert \alpha _{s}\right\vert +1}\right) \left(
D_{_{j}}-D_{M_{\left\vert \alpha _{s}\right\vert }}\right) .
\end{equation*}

Let $M_{\left\vert \alpha _{s}\right\vert +1}<j\leq M_{\left\vert \alpha
_{s+1}\right\vert },$ for some $s=1,2,...,k.$ Analogously to (\ref{27}) we
get that
\begin{equation}
S_{j}f=\sum_{v=0}^{M_{\left\vert \alpha _{s}\right\vert +1}}\widehat{f}%
(v)w_{v}=\sum_{\eta =0}^{s}\Phi ^{1/2p}\left( M_{\left\vert \alpha _{\eta
}\right\vert +1}\right) \left( D_{M_{\left\vert \alpha _{\eta }\right\vert
+1}}-D_{M_{\left\vert \alpha _{\eta }\right\vert }}\right) .  \label{28}
\end{equation}

Let $x\in I_{2}^{0,1}=\left( x_{0}=1,\text{ }x_{1}=1,\text{ }x_{2},\dots
\right) .$ Since (see (\ref{3}) and Lemma 2)%
\begin{equation}
K_{M_{n}}\left( x\right) =D_{M_{n}}\left( x\right) =0,\text{ \ \ \ for \ \ }%
n\geq 2  \label{29}
\end{equation}%
from (\ref{22}) and (\ref{27})-(\ref{28}) we obtain that%
\begin{equation}
I=\frac{1}{n}\sum_{\eta =0}^{k-1}\Phi ^{1/2p}\left( M_{\left\vert \alpha
_{\eta }\right\vert +1}\right) \sum_{v=M_{\left\vert \alpha _{\eta
}\right\vert }+1}^{M_{\left\vert \alpha _{\eta }\right\vert +1}}D_{v}
\label{30a}
\end{equation}%
\begin{equation*}
=\frac{1}{n}\sum_{\eta =0}^{k-1}\Phi ^{1/2p}\left( M_{\left\vert \alpha
_{\eta }\right\vert +1}\right) \left( M_{\left\vert \alpha _{\eta
}\right\vert +1}K_{M_{\left\vert \alpha _{\eta }\right\vert +1}}\left(
x\right) -M_{\left\vert \alpha _{\eta }\right\vert }K_{M_{\left\vert \alpha
_{\eta }\right\vert }}\left( x\right) \right) =0.
\end{equation*}

By applying (\ref{27}), when $s=k$ \ in $II$ \ we get that

\begin{equation}
II=\frac{\alpha _{k}-M_{\left\vert n_{k}\right\vert }}{\alpha _{k}}%
\sum_{\eta =0}^{k-1}\Phi ^{1/2p}\left( M_{\left\vert \alpha _{\eta
}\right\vert +1}\right) \left( D_{M_{\left\vert \alpha _{\eta }\right\vert
+1}}-D_{M_{\left\vert \alpha _{\eta }\right\vert }}\right)  \label{9aa}
\end{equation}%
\begin{equation*}
+\frac{\Phi ^{1/2p}\left( M_{\left\vert n_{k}\right\vert +1}\right) }{\alpha
_{k}}\sum_{j=M_{\left\vert n_{k}\right\vert }+1}^{\alpha _{k}}\left(
D_{_{j}}-D_{M_{\left\vert n_{k}\right\vert }}\right) =II_{1}+II_{2}.
\end{equation*}

By using (\ref{29}) we have that

\begin{equation}
II_{1}=0,\text{ \ for }x\in I_{2}^{0,1}.  \label{10aaa}
\end{equation}

Let $\alpha _{k}\in \mathbb{A}_{0,2}$ and $x\in I_{2}^{0,1}$. Since $\alpha
_{k}-M_{\left\vert \alpha _{k}\right\vert }\in \mathbb{A}_{0,2}$ and%
\begin{equation*}
D_{j+M_{\left\vert \alpha _{k}\right\vert }}=D_{M_{\left\vert \alpha
_{k}\right\vert }}+w_{_{M_{\left\vert \alpha _{k}\right\vert }}}D_{j},\text{
when \thinspace \thinspace }j<M_{\left\vert \alpha _{k}\right\vert }
\end{equation*}%
By combining (\ref{3}) Lemmas 4 and 6 we obtain that%
\begin{eqnarray}
&&\left\vert II_{2}\right\vert =\frac{\Phi ^{1/2p}\left( M_{\left\vert
\alpha _{k}\right\vert +1}\right) }{\alpha _{k}}\left\vert
\sum_{j=1}^{\alpha _{k}-M_{\left\vert \alpha _{k}\right\vert }}\left(
D_{j+M_{\left\vert \alpha _{k}\right\vert }}\left( x\right)
-D_{M_{\left\vert \alpha _{k}\right\vert }}\left( x\right) \right)
\right\vert  \label{11aaa} \\
&=&\frac{\Phi ^{1/2p}\left( M_{\left\vert \alpha _{k}\right\vert +1}\right)
}{\alpha _{k}}\left\vert \sum_{j=1}^{\alpha _{k}-M_{\left\vert \alpha
_{k}\right\vert }}D_{_{j}}\left( x\right) \right\vert  \notag \\
&=&\frac{\Phi ^{1/2p}\left( M_{\left\vert \alpha _{k}\right\vert +1}\right)
}{\alpha _{k}}\left\vert \left( \alpha _{k}-M_{\left\vert \alpha
_{k}\right\vert }\right) K_{\alpha _{k}-M_{\left\vert \alpha _{k}\right\vert
}}\left( x\right) \right\vert  \notag \\
&=&\frac{\Phi ^{1/2p}\left( M_{\left\vert \alpha _{k}\right\vert +1}\right)
}{\alpha _{k}}\left\vert M_{0}K_{M_{0}}\right\vert \geq \frac{\Phi
^{1/2p}\left( M_{\left\vert \alpha _{k}\right\vert +1}\right) }{\alpha _{k}}.
\notag
\end{eqnarray}

Let $0<p<1/2,$ $n\in \mathbb{A}_{0,2}$ and $M_{\left\vert \alpha
_{k}\right\vert }<n<M_{\left\vert \alpha _{k}\right\vert +1}.$ By combining (%
\ref{7aa}-\ref{11aaa}) we have that

\begin{eqnarray*}
&&\left\Vert \sigma _{n}f\right\Vert _{L_{p,\infty }}^{p}\geq \frac{c\Phi
^{1/2}\left( M_{\left\vert \alpha _{k}\right\vert +1}\right) }{\alpha
_{k}^{p}}\mu \left\{ x\in I_{2}^{0,1}:\text{ }\left\vert II_{2}\right\vert
\geq \frac{c\Phi ^{1/2p}\left( M_{\left\vert \alpha _{k}\right\vert
+1}\right) }{\alpha _{k}}\right\} \\
&\geq &\frac{c\Phi ^{1/2}\left( M_{\left\vert \alpha _{k}\right\vert
+1}\right) }{\alpha _{k}^{p}}\mu \left\{ I_{2}^{0,1}\right\} \geq \frac{%
c\Phi ^{1/2}\left( M_{\left\vert \alpha _{k}\right\vert +1}\right) }{%
M_{\left\vert \alpha _{k}\right\vert +1}^{p}}.
\end{eqnarray*}

By using (\ref{22w}) we get that
\begin{eqnarray*}
&&\underset{n=1}{\overset{\infty }{\sum }}\frac{\left\Vert \sigma
_{n}f\right\Vert _{L_{p,\infty }}^{p}}{\Phi \left( n\right) }\geq \underset{%
\left\{ n\in \mathbb{A}_{0,2}:\text{ }M_{\left\vert \alpha _{k}\right\vert
}<n<M_{\left\vert \alpha _{k}\right\vert +1}\right\} }{\sum }\frac{%
\left\Vert \sigma _{n}f\right\Vert _{L_{p,\infty }}^{p}}{\Phi \left(
n\right) } \\
&\geq &\frac{1}{\Phi ^{1/2}\left( M_{\left\vert \alpha _{k}\right\vert
+1}\right) }\underset{\left\{ n\in \mathbb{A}_{0,2}:\text{ }M_{\left\vert
\alpha _{k}\right\vert }<n<M_{\left\vert \alpha _{k}\right\vert +1}\right\} }%
{\sum }\frac{1}{M_{\left\vert \alpha _{k}\right\vert +1}^{p}} \\
&\geq &\frac{cM_{\left\vert \alpha _{k}\right\vert }^{1-p}}{\Phi
^{1/2}\left( M_{\left\vert \alpha _{k}\right\vert +1}\right) }\rightarrow
\infty ,\text{ when \ \ }k\rightarrow \infty .
\end{eqnarray*}%
Theorem 2 is proved.


\begin{thebibliography}{99}
\bibitem{AVD} G. N. AGAEV, N. Ya. VILENKIN, G. M. DZHAFARLY and A. I.
RUBINSHTEIN, Multiplicative systems of functions and harmonic analysis on
zero-dimensional groups, Baku, Ehim, (1981) (in Russian).

\bibitem{blahota} I. BLAHOTA, Relation between Dirichlet kernels with
respect to Vilenkin-like systems, Acta Academiae Paedagogicae Agriensis,
XXII, (1994), 109-114.

\bibitem{BGG} I. BLAHOTA, G. GÁT and U. GOGINAVA, Maximal operators of
Fej\'er means of double Vilenkin-Fourier series, Colloq. Math. 107, no. 2,
(2007), 287-296.

\bibitem{BGG2} I. BLAHOTA, G. GÁT and U. GOGINAVA, Maximal operators of
Fej\'er means of Vilenkin-Fourier series. JIPAM. J. Inequal. Pure Appl.
Math. 7, (2006), 1-7.

\bibitem{Fu} N. J. FUJII, A maximal inequality for $H_{1}$ functions on the
generalized Walsh-Paley group, Proc. Amer. Math. Soc. 77, (1979), 111-116.

\bibitem{gat} G. GÁT, Ces\`{a}ro means of integrable functions with respect
to unbounded Vilenkin systems. J. Approx. Theory 124, no. 1, (2003), 25-43.

\bibitem{gat1} G. GÁT, Investigations of certain operators with respect to
the Vilenkin system, Acta Math. Hung., 61, (1993), 131-149.

\bibitem{GoAMH} U. GOGINAVA, Maximal operators of Fej\'er means of double
Walsh-Fourier series. Acta Math. Hungar. 115, no. 4, (2007), 333-340.

\bibitem{GoPubl} U. GOGINAVA, The maximal operator of the Fejér means of the
character system of the $p$-series field in the Kaczmarz rearrangement.
Publ. Math. Debrecen 71, no. 1-2,(2007), 43-55.

\bibitem{GNCz} U. GOGINAVA and K. NAGY , On the maximal operator of
Walsh-Kaczmarz-Fejér means, Czechoslovak Math. J. (to appear).

\bibitem{PS} J. PÁL and P. SIMON, On a generalization of the concept
ofderivate, Acta Math. Hung., 29 (1977), 155-164.

\bibitem{Sc} F. SCHIPP, Certain rearrangements of series in the Walsh
series, Mat. Zametki, 18, (1975), 193-201.

\bibitem{SiW} P. SIMON, F. WEISZ, Strong convergence theorem for
two-parameter Vilenkin-Fourier series. Acta Math. Hungar. 86, (2000), 17-38

\bibitem{Si2} P. SIMON, Investigations with respect to the Vilenkin system,
Annales Univ. Sci. Budapest E\"otv., Sect. Math., 28, (1985) 87-101.

\bibitem{Si3} P. SIMON, Strong convergence of certain means with respect to
the Walsh-Fourier series, Acta Math. Hung., 49 (1-2), (1987), 425-431.

\bibitem{Si4} P. SIMON, Strong Convergence Theorem for Vilenkin-Fourier
Series, Journal of Mathematical Analysis and Applications, 245, (2000),
52-68.

\bibitem{sm} B. SMITH, A strong convergence theorem for $H^{1}\left(
T\right) ,$ in Lecture Notes in Math., 995, Springer, Berlin, (1994),
169-173.

\bibitem{tep1} G.TEPHNADZE, Fej\'er means of Vilenkin-Fourier series, Stud.
sci. Math. Hung., 49 (1), (2012) 79-90.

\bibitem{tep2} G.TEPHNADZE, On the maximal operator of Vilenkin-Fej\'er
means, Turk. J. Math, 37, (2013), 308-318.

\bibitem{tep3} G.TEPHNADZE, On the maximal operators of Vilenkin-Fej\'er
means on Hardy spaces, Mathematical Inequalities \& Applications Vol. 16, N.
2, (2013), 301-312.

\bibitem{tep4} G.TEPHNADZE,\textit{\ }A note on the Fourier coefficients and
partial sums of Vilenkin-Fourier series, Acta Math. Acad. Paed. Ny\'\i reg.
(AMAPN), 28, (2012), 167-176.

\bibitem{tep5} G.TEPHNADZE, Strong convergence theorems of Walsh-Fej\'er
means, Acta Math. Hungar, (to appear).

\bibitem{tep6} G.TEPHNADZE, On the partial sums of Vilenkin-Fourier series,
Journal of Contemporary Mathematical Analysis, (to appear).

\bibitem{Vi} N. Ya. VILENKIN, On a class of complete orthonormal systems,
Izv. Akad. Nauk. U.S.S.R., Ser. Mat., 11 (1947), 363-400.

\bibitem{We1} F. WEISZ, Martingale Hardy spaces and their applications in
Fourier Analysis, Springer, Berlin-Heideiberg-New York, (1994).

\bibitem{We3} F. WEISZ, Hardy spaces and Ces\`{a}ro means of two-dimensional
Fourier series, Bolyai Soc. Math. Studies, (1996), 353-367.

\bibitem{We2} F. WEISZ, Ces\`{a}ro summability of one and two-dimensional
Fourier series, Anal. Math. 5 (1996), 353-367.
\end{thebibliography}
\end{document}